\def\NZQ{\mathbb}               
\def\ZZ{{\NZQ Z}}
\def\RR{{\NZQ R}}
\def\frk{\mathfrak}               
\def\Phi{{\frk N}}
\def\ab{{\bold a}}
\def\eb{{\bold e}}
\def\tb{{\bold t}}
\def\opn#1#2{\def#1{\operatorname{#2}}} 
\opn\chara{char} 
\opn\length{\ell} 
\opn\pd{pd} 
\opn\rk{rk}
\opn\projdim{proj\,dim} 
\opn\injdim{inj\,dim} 
\opn\rank{rank}
\opn\depth{depth} 
\opn\grade{grade} 
\opn\height{height}
\opn\embdim{emb\,dim} 
\opn\codim{codim}
\opn\Tr{Tr} 
\opn\bigrank{big\,rank}
\opn\superheight{superheight}
\opn\lcm{lcm}
\opn\trdeg{tr\,deg}
\opn\reg{reg} 
\opn\lreg{lreg} 
\opn\ini{in} 
\opn\lpd{lpd}
\opn\size{size}
\opn\mult{mult}
\opn\dist{dist}
\opn\cone{cone}
\opn\lex{lex}
\opn\rev{rev}
\opn\div{div} \opn\Div{Div} \opn\cl{cl} \opn\Cl{Cl}
\opn\Spec{Spec} \opn\Supp{Supp} \opn\supp{supp} \opn\Sing{Sing}
\opn\Ass{Ass} \opn\Min{Min}
\opn\Ann{Ann} \opn\Rad{Rad} \opn\Soc{Soc}
\opn\Syz{Syz} \opn\Im{Im} \opn\Ker{Ker} \opn\Coker{Coker}
\opn\Am{Am} \opn\Hom{Hom} \opn\Tor{Tor} \opn\Ext{Ext}
\opn\End{End} \opn\Aut{Aut} \opn\id{id} \opn\ini{in}
\opn\nat{nat}
\opn\pff{pf}
\opn\Pf{Pf} \opn\GL{GL} \opn\SL{SL} \opn\mod{mod} \opn\ord{ord}
\opn\Gin{Gin}
\opn\Hilb{Hilb}\opn\adeg{adeg}\opn\std{std}\opn\ip{infpt}
\opn\Pol{Pol}
\opn\sat{sat}
\opn\Var{Var}
\opn\Gen{Gen}
\opn\aff{aff} \opn\con{conv} \opn\relint{relint} \opn\st{st}
\opn\lk{lk} \opn\cn{cn} \opn\core{core} \opn\vol{vol}
\opn\link{link} \opn\star{star}
\opn\gr{gr}
\def\Ac{{\mathcal A}}
\def\Jc{{\mathcal J}}
\def\Gc{{\mathcal G}}
\def\Oc{{\mathcal O}}
\def\Pc{{\mathcal P}}
\def\Qc{{\mathcal Q}}
\def\Cc{{\mathcal C}}
\def\pot#1#2{#1[\kern-0.28ex[#2]\kern-0.28ex]}
\opn\dirlim{\underrightarrow{\lim}}
\opn\inivlim{\underleftarrow{\lim}}
\let\to=\rightarrow
\def\Implies{\ifmmode\Longrightarrow \else
	\unskip${}\Longrightarrow{}$\ignorespaces\fi}
\def\implies{\ifmmode\Rightarrow \else
	\unskip${}\Rightarrow{}$\ignorespaces\fi}
\def\iff{\ifmmode\Longleftrightarrow \else
	\unskip${}\Longleftrightarrow{}$\ignorespaces\fi}
\newtheorem{Theorem}{Theorem}[section]
\newtheorem{Lemma}[Theorem]{Lemma}
\newtheorem{Proposition}[Theorem]{Proposition}
\newtheorem{Example}[Theorem]{Example}
\newtheorem{Question}[Theorem]{Question}
\let\epsilon\varepsilon
\let\phi=\varphi
\let\kappa=\varkappa
\def\qed{\ifhmode\textqed\fi
	\ifmmode\ifinner\quad\qedsymbol\else\dispqed\fi\fi}
\def\textqed{\unskip\nobreak\penalty50
	\hskip2em\hbox{}\nobreak\hfil\qedsymbol
	\parfillskip=0pt \finalhyphendemerits=0}
\def\dispqed{\rlap{\qquad\qedsymbol}}
\opn\dis{dis}
\opn\height{height}
\opn\dist{dist}
\def\pnt{{\raise0.5mm\hbox{\large\bf.}}}
\opn\Lex{Lex}
\opn\conv{conv}
\begin{document}
\title{Facets and volume of Gorenstein Fano polytopes}
\author[T. Hibi]{Takayuki Hibi}
\address[Takayuki Hibi]{Department of Pure and Applied Mathematics,
	Graduate School of Information Science and Technology,
	Osaka University,
	Suita, Osaka 565-0871, Japan}
\email{hibi@math.sci.osaka-u.ac.jp}
\author[A. Tsuchiya]{Akiyoshi Tsuchiya}
\address[Akiyoshi Tsuchiya]{Department of Pure and Applied Mathematics,
	Graduate School of Information Science and Technology,
	Osaka University,
	Suita, Osaka 565-0871, Japan}
\email{a-tsuchiya@cr.math.sci.osaka-u.ac.jp}

\subjclass[2010]{13P10, 52B20}
\date{}
\keywords{Gorenstein Fano polytope, Ehrhart polynomial, Gr\"{o}bner basis, normal polytope, poset polytope
}
\thanks{The second author was partially supported by Grant-in-Aid for JSPS Fellows 16J01549.}

\begin{abstract}
		It is known that every integral convex polytope is unimodularly
	equivalent to a face of some Gorenstein Fano polytope. It is then
	reasonable to ask whether every normal polytope is unimodularly
	equivalent to a face of some normal Gorenstein Fano polytope.  In the
	present paper, it is shown that, by giving new classes of normal
	Gorenstein Fano polytopes, each order polytope as well as each chain
	polytope of dimension $d$ is unimodularly equivalent to a facet of some
	normal Gorenstein Fano polytopes of dimension $d + 1$.  Furthermore,
	investigation on combinatorial properties, especially, Ehrhart
	polynomials and volume of these new polytopes will be achieved.
	Finally, some curious examples of Gorenstein Fano polytopes will be
	discovered.
\end{abstract} 

\maketitle
\section{Introduction}

First of all, fundamental materials on integral polytopes are summarized and the notation employed in the present paper is introduced.

\subsection{Gorenstein Fano polytopes}
Recall that 
an \textit{integral} convex polytope is a convex polytope all of whose vertices have integer coordinates.
An integral convex polytope $\Pc \subset \RR^d$ of dimension $d$ is called
\textit{Gorenstein Fano} if the origin of $\RR^d$ is a unique lattice point
(i.e., integer point)
belonging to the interior of $\Pc$ and its dual polytope 
\[
\Pc^{\vee} := \{ {\bf x} \in \RR^{d} 
\mid \langle {\bf x}, {\bf y} \rangle \le 1 \  
\mathrm{for\ all}  \ {\bf y} \in \Pc \}
\]
is again integral. 
A Gorenstein Fano polytope is also called a \textit{reflexive} polytope. 
In recent years, the study on Gorenstein Fano polytopes has been achieved by many authors.
It is known that Gorenstein Fano polytopes correspond to Gorenstein toric Fano varieties
and, furthermore, they are related with mirror symmetry (e.g., \cite{mirror,Cox}).
There exist only finitely many Gorenstein Fano polytopes up to unimodular equivalence in each dimension (\cite{Lag}),
and all of them are known up to dimension $4$ (\cite{Kre}).

\subsection{Two poset polytopes}
Let $P = \{p_1, \ldots, p_d\}$ denote a finite partially ordered set 
(\textit{poset}, for short). 
A {\em linear extension} of $P$ is a permutation $\sigma = i_1 i_2 \cdots i_d$ 
of $[d] = \{1, 2, \ldots, d\}$ 
which satisfies $i_a < i_b$ if $p_{i_a} < p_{i_b}$ in $P$. 
Stanley \cite{Stanley}
introduced two classes of integral convex polytopes arising from finite posets, 
order polytopes and chain polytopes.
The \textit{order polytope} $\Oc_P$ of $P$
is defined to be the convex polytope consisting of those $(x_1,\ldots,x_d) \in \RR^d$ such that
\begin{enumerate}
	\item $0 \leq x_i \leq 1$ for $1 \leq i \leq d$;
	\item $x_i \geq x_j$ if $p_i \leq p_j$ in $P$.
\end{enumerate}
The \textit{chain polytope} $\Cc_P$ is defined to be the convex polytope consisting of those $(x_1,\ldots,x_d) \in \RR^d$ such that
\begin{enumerate}
	\item $ x_i \geq 0$ for $1 \leq i \leq d$;
	\item $x_{i_1}+ \cdots + x_{i_k} \leq 1$ for every maximal chain $p_{i_1}< \cdots <p_{i_k}$ of $P$.
\end{enumerate}
It then follows that
both order polytopes and chain polytopes 
are integral convex polytopes of dimension $d$.

We say that an integral convex polytope $\Pc \subset \RR^{d}$ of dimension $d$ 
is \textit{normal} if, for each integer $N > 0$ 
and for each ${\bf a} \in N \Pc \cap \ZZ^{d}$, there exist ${\bf a}_1, \ldots, {\bf a}_N \in \Pc \cap \ZZ^{d}$ 
for which 
${\bf a} = {\bf a}_1 + \cdots +{\bf a}_N$, where 
$N \Pc = \{ N \alpha \mid \alpha \in \Pc \}$.

Let $i(\Pc,n)$ denote the \textit{Ehrhart polynomial} of $\Pc$.
Thus $i(\Pc,n)$ is the numerical function 
\[
i(\Pc, n) := \left| (n\Pc \cap \ZZ^{d}) \right|, 
\, \, \, \, \, 1 \leq n \in \ZZ.
\]
Then $i(\Pc, n)$ is, in fact, a polynomial in $n$ of degree $d$ with $i(\Pc, 0) = 1$ (\cite{Ehrhart}). 
Moreover, the leading coefficient of $i(\Pc,n)$ equals the volume of $\Pc$.

It is known that $\Oc_P$ and $\Cc_P$ are normal with $i(\Oc_P, n)=i(\Cc_P, n)$.
Thus, in particular, 
one has 
$\vol (\Oc_P)=\vol(\Cc_P)=e(P)/d!$, where $e(P)$ is the number of linear extensions of $P$ (\cite[Corollary 4.2]{Stanley}).

\subsection{Gorenstein Fano polytopes arising from poset polytopes}

Given two integral convex polytopes $\Pc$ and $\Qc$ of dimension $d$ in $\RR^d$, 
we set
$$\Gamma(\Pc,\Qc)=\textnormal{conv}\{\Pc\cup (-\Qc)\}\subset \RR^{d}.$$
Let 
$P=\{p_1,\ldots,p_d\}$ and $Q=\{q_1,\ldots,q_d\}$ be finite posets.
Our research objects are $\Gamma(
\Oc_P,
\Oc_Q
)$, $\Gamma(
\Oc_P,
\Cc_Q)$ and $\Gamma(
\Cc_P
,
\Cc_Q)$.
If $P$ and $Q$ possess a common linear extension, 
then $\Gamma(\Oc_P,\Oc_Q)$ is normal Gorenstein Fano (\cite{twin}).
Furthermore,  
each of $\Gamma(\Oc_P,\Cc_Q)$ and $\Gamma(\Cc_P,\Cc_Q)$ is a normal Gorenstein Fano polytope (\cite{orderchain,harmony}).
In addition, if $P$ and $Q$ possess a common linear extension, then these three polytopes have the same Ehrhart polynomial (\cite{volOC})
and a formula to compute their volume is given in terms of the number of linear extensions of the underlying finite posets $P$ and $Q$ (\cite{vol}).

\subsection{Motivation and results}
It is known \cite{refdim} that every integral convex polytope is unimodularly equivalent to a face of some Gorenstein Fano polytope.
This fact naturally lead us to the study of the following question:
\begin{Question}
	\label{q1}
	Is every normal polytope unimodularly equivalent to a face of some normal Gorenstein Fano polytope?
\end{Question}
In this paper, we discuss this question for $(0,1)$-polytopes.
Given two integral convex polytopes $\Pc$ and $\Qc$ of dimension $d$ in $\RR^d$, 
we set
$$\Omega(\Pc,\Qc)=\textnormal{conv}\{(\Pc\times\{1\}) \cup (-\Qc\times\{-1\})\}\subset \RR^{d+1}.$$
Then $\Omega(\Pc,\Qc)$ is an integral convex polytope of dimension $d+1$
and, in addition, each of $\Pc$ and $\Qc$ is a facet of $\Omega(\Pc,\Qc)$.
As an analogy of Question \ref{q1}, we propose the following question:
\begin{Question}
	\label{q2}
	Given any normal $(0,1)$-polytope $\Pc \subset \RR^d$ of dimension $d$, 
	does there exist a normal $(0,1)$-polytope $\Qc \subset \RR^d$ of dimension $d$
	such that $\Omega(\Pc,\Qc)$ is a normal Gorenstein Fano polytope?
\end{Question}
In Section 2, we consider Question \ref{q2} 
for order and chain polytopes.
We discuss the problem when each of $\Omega(
\Oc_P,
\Oc_Q
)$, $\Omega(
\Oc_P,
\Cc_Q)$
and $\Omega(
\Cc_P
,
\Cc_Q)$ is a normal Gorenstein Fano polytope.
Our first main result is
\begin{Theorem}
	\label{Gor}
	Let $P=\{p_1,\ldots,p_d\}$ and $Q=\{q_1,\ldots,q_d\}$ be finite posets.\\
	\textnormal{(1)} If $P$ and $Q$ possess a common linear extension, then $\Omega(
	\Oc_P,
	\Oc_Q
	)$ is a normal Gorenstein Fano polytope.\\
	\textnormal{(2)} Each of $\Omega(
	\Oc_P,
	\Cc_Q)$ and  $\Omega(
	\Cc_P
	,
	\Cc_Q)$ is a normal Gorenstein Fano polytope.
\end{Theorem}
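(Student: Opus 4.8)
The strategy rests on the identity
\[
\Omega(\Pc,\Qc)=\Gamma\bigl(\Pi(\Pc),\Pi(\Qc)\bigr),\qquad
\Pi(\Pc):=\conv\bigl(\{\mathbf 0\}\cup(\Pc\times\{1\})\bigr)\subset\RR^{d+1},
\]
valid whenever $\mathbf 0\in\Pc\cap\Qc$ (which holds for order and chain polytopes): since $\mathbf 0=\tfrac12(\mathbf 0,1)+\tfrac12(\mathbf 0,-1)$, the origin already lies in $\conv\bigl((\Pc\times\{1\})\cup((-\Qc)\times\{-1\})\bigr)$, so $\conv(\Pi(\Pc)\cup(-\Pi(\Qc)))$ does not grow beyond $\Omega(\Pc,\Qc)$. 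A direct check with the defining inequalities shows that $\Pi(\Oc_P)=\Oc_{\widehat P}$, where $\widehat P$ is the poset obtained from $P$ by adjoining a new element below everything; and any common linear extension of $P$ and $Q$ gives one of $\widehat P$ and $\widehat Q$ by placing the new element first. Hence $\Omega(\Oc_P,\Oc_Q)=\Gamma(\Oc_{\widehat P},\Oc_{\widehat Q})$ is a normal Gorenstein Fano polytope by \cite{twin}, proving (1).

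For (2) this reduction is not available: $\Pi(\Cc_Q)$ is a $(0,1)$-polytope (the pyramid over $\Cc_Q$ with apex at the origin), but it is not a chain polytope --- the ``pyramidal'' inequalities $\sum_{i\in C}x_i\le x_{d+1}$ are not chain inequalities, and the flip $x_{d+1}\mapsto1-x_{d+1}$ that would turn $\Pi(\Cc_Q)$ into $\Cc_{\widehat Q}$ shifts the two summands of $\Gamma$ relative to one another. I would therefore verify reflexivity of $\Omega(\Oc_P,\Cc_Q)$ and $\Omega(\Cc_P,\Cc_Q)$ directly, via the two defining conditions. First, since $\Oc_P,\Cc_P,\Oc_Q,\Cc_Q\subseteq[0,1]^d$, the section $\{x_{d+1}=0\}\cap\Omega(\Pc,\Qc)=\tfrac12\bigl(\Pc+(-\Qc)\bigr)$ is contained in $[-\tfrac12,\tfrac12]^d$; as any interior lattice point of $\Omega(\Pc,\Qc)$ has last coordinate $0$, the origin is the only candidate, and one checks it really is interior. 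Second, one must show every facet lies at lattice distance $1$ from the origin.

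The facet $\Pc\times\{1\}$ lies on $x_{d+1}=1$ and $(-\Qc)\times\{-1\}$ on $x_{d+1}=-1$, so these are fine; every other facet is a ``side'' facet, the convex hull of a proper face of $\Pc\times\{1\}$ and a proper face of $(-\Qc)\times\{-1\}$. If $(c,c_{d+1})\in\ZZ^{d+1}$ is the primitive outer normal of such a facet and $\langle(c,c_{d+1}),\cdot\rangle\le b$ its inequality, then comparing the value on $\Pc\times\{1\}$ with the value on $(-\Qc)\times\{-1\}$ yields
\[
b=\tfrac12\Bigl(\max_{x\in\Pc}\langle c,x\rangle-\min_{x\in\Qc}\langle c,x\rangle\Bigr),\qquad
c_{d+1}=-\tfrac12\Bigl(\max_{x\in\Pc}\langle c,x\rangle+\min_{x\in\Qc}\langle c,x\rangle\Bigr).
\]
Because $\Pc$ and $\Qc$ are $(0,1)$-polytopes and $c$ is integral, both extrema are integers, so it suffices to prove $\max_{x\in\Pc}\langle c,x\rangle-\min_{x\in\Qc}\langle c,x\rangle=2$ for every primitive $c$ arising this way. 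This is the combinatorial heart of the argument, carried out using the explicit descriptions of the vertices of $\Oc_P$ and $\Cc_Q$ (indicator vectors of order filters, resp.\ of antichains) together with the facet descriptions of $\Oc_P$ and $\Cc_Q$ (covering relations and extreme elements; maximal chains). Finally, normality of $\Omega(\Pc,\Qc)$ would be obtained by producing a squarefree initial ideal of its toric ideal --- equivalently, a regular unimodular triangulation --- assembled from the known Gr\"obner bases and triangulations of the order and chain polytopes involved (for (1) this is already contained in \cite{twin}).

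The main obstacle is precisely the side-facet step for (2): controlling the ``tilted'' facets ($c_{d+1}\neq0$), which are the mixed facets of $\Gamma(\Pi(\Pc),\Pi(\Qc))$ linking the combinatorics of $P$ to that of $Q$. For (1) the corresponding difficulty is dealt with invisibly by \cite{twin}, where the common linear extension of $P$ and $Q$ (equivalently of $\widehat P$ and $\widehat Q$) is exactly what makes those mixed facets behave; for (2) one must instead exploit the more rigid facet geometry of the chain polytope, which is why no common-linear-extension hypothesis is needed there.
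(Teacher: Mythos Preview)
Your argument for (1) is exactly the paper's: both identify $\Omega(\Oc_P,\Oc_Q)$ with $\Gamma(\Oc_{P'},\Oc_{Q'})$ for $P'=\{p_{d+1}\}\oplus P$ and $Q'=\{q_{d+1}\}\oplus Q$ (your $\widehat P,\widehat Q$) and then invoke \cite{twin}.

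For (2) the routes diverge, and your proposal has a real gap. You split the task into ``reflexive'' (via direct facet analysis) plus ``normal'' (via a squarefree initial ideal to be assembled later). The facet step is not carried out: you reduce it to the identity $\max_{x\in\Pc}\langle c,x\rangle-\min_{x\in\Qc}\langle c,x\rangle=2$ for every primitive side-facet normal $(c,c_{d+1})$, but give no mechanism for controlling which $c\in\ZZ^d$ actually occur, nor why the difference is always exactly $2$. Since $c$ need not itself be primitive in $\ZZ^d$, the vertex and facet descriptions of $\Oc_P$ and $\Cc_Q$ do not obviously pin down these extrema. This is not routine bookkeeping --- it is the entire content of reflexivity in the mixed case, and you yourself flag it as the ``main obstacle.''

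The paper sidesteps the facet computation entirely. It writes down explicit binomial sets $\Gc_{\Oc\Cc}$ and $\Gc_{\Cc\Cc}$ and proves, by a direct reduction argument in the style of \cite{OHrootsystem}, that each is a reverse-lexicographic Gr\"obner basis of the relevant toric ideal, with the variable $z$ corresponding to the origin taken smallest; the initial monomials are visibly squarefree. A single lemma from \cite{HMOS} (origin interior, lattice spanned, squarefree reverse-lex initial ideal with the origin-variable smallest $\Rightarrow$ normal Gorenstein Fano) then delivers normality and reflexivity simultaneously. In particular, even on your own plan, once you produce the squarefree initial ideal you say you need for normality, reflexivity already follows for free via \cite{HMOS}; the separate facet analysis is not just incomplete but redundant.
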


\smallskip
In Section 3, we consider combinatorial properties of these polytopes, especially, 
the Ehrhart polynomials and the volume of $\Omega(
\Oc_P,
\Oc_Q
)$, $\Omega(
\Oc_P,
\Cc_Q)$
and $\Omega(
\Cc_P
,
\Cc_Q)$.
The \textit{ordinal sum} of $P$ and $Q$ is the finite poset $P \oplus Q$ 
on the union $P \cup Q$ such that $s \leq t$ in $P \oplus Q$  
if (a) $s,t \in P$ and $s \leq t$ in $P$,
or (b) $s,t \in Q$ and $s \leq t$ in $Q$,
or (c) $s \in P$ and $t \in Q$.
Our second main result is
\begin{Theorem}
	\label{Ehr}
	Let $P=\{p_1,\ldots,p_d\}$ and $Q=\{q_1,\ldots,q_d\}$ be finite
	poset.  We set $P'=\{p_{d+1}\} \oplus P$ and $Q'=\{q_{d+1}\} \oplus Q$.
	If $P$ and $Q$ possess a common linear extension, then all of 
	$\Omega(
	\Oc_P,
	\Oc_Q
	)$, $\Omega(
	\Oc_P,
	\Cc_Q)$, $\Omega(
	\Cc_P
	,
	\Cc_Q)$, 
	$\Gamma (\Oc_{P'},\Oc_{Q'})$, $\Gamma (\Oc_{P'},\Cc_{Q'})$ and
	$\Gamma(\Cc_{P'},\Cc_{Q'})$ have the same Ehrhart polynomial.
	In particular, these polytopes have the same volume.
\end{Theorem}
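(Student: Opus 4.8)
The plan is to pull everything back to the already-established fact \cite{volOC} that, whenever $P'$ and $Q'$ admit a common linear extension, the three polytopes $\Gamma(\Oc_{P'},\Oc_{Q'})$, $\Gamma(\Oc_{P'},\Cc_{Q'})$ and $\Gamma(\Cc_{P'},\Cc_{Q'})$ share one Ehrhart polynomial. First I would prove the geometric identity $\Omega(\Oc_P,\Oc_Q)=\Gamma(\Oc_{P'},\Oc_{Q'})$. Because $p_{d+1}$ is the unique minimal element of $P'=\{p_{d+1}\}\oplus P$, the inequalities defining $\Oc_{P'}$ are $0\le x_i\le x_{d+1}\le 1$ $(1\le i\le d)$ together with the relations of $P$ on $x_1,\dots,x_d$, so dividing by $x_{d+1}$ shows $\Oc_{P'}=\conv(\{{\bf 0}\}\cup(\Oc_P\times\{1\}))$ and, likewise, $-\Oc_{Q'}=\conv(\{{\bf 0}\}\cup(-\Oc_Q\times\{-1\}))$. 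Since ${\bf 0}$ is a vertex of $\Oc_P$ and of $\Oc_Q$, the origin is the midpoint of $({\bf 0},1)$ and $({\bf 0},-1)$, hence already belongs to $\Omega(\Oc_P,\Oc_Q)=\conv((\Oc_P\times\{1\})\cup(-\Oc_Q\times\{-1\}))$, so $\Gamma(\Oc_{P'},\Oc_{Q'})=\conv(\Oc_{P'}\cup(-\Oc_{Q'}))=\Omega(\Oc_P,\Oc_Q)$. Moreover, prepending the new bottom elements turns a common linear extension $i_1\cdots i_d$ of $P$ and $Q$ into the common linear extension $(d+1)\,i_1\cdots i_d$ of $P'$ and $Q'$, so \cite{volOC} yields a single Ehrhart polynomial shared by $\Omega(\Oc_P,\Oc_Q)$ and the three $\Gamma$-polytopes.

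It remains to handle $\Omega(\Oc_P,\Cc_Q)$ and $\Omega(\Cc_P,\Cc_Q)$. The same computation gives $\Omega(\Oc_P,\Cc_Q)=\Gamma(\Oc_{P'},\widehat{\Cc}_Q)$ and $\Omega(\Cc_P,\Cc_Q)=\Gamma(\widehat{\Cc}_P,\widehat{\Cc}_Q)$, where for a poset $R$ (with $R'=\{r_{d+1}\}\oplus R$) one sets $\widehat{\Cc}_R:=\conv(\{{\bf 0}\}\cup(\Cc_R\times\{1\}))$; note that $\widehat{\Cc}_R$ is unimodularly equivalent, via $(x,t)\mapsto(x,1-t)$, to $\Cc_{R'}$, but not equal to it, so the reduction is not immediate. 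To finish, I would invoke Stanley's piecewise-linear transfer map $\varphi_R\colon\Oc_R\to\Cc_R$, which is linear and unimodular on each maximal cell of the linear-extension subdivision of $\Oc_R$, is homogeneous (those cells are cones from the origin), and restricts to a bijection $N\Oc_R\cap\ZZ^d\to N\Cc_R\cap\ZZ^d$ for every $N\ge 1$; coning it radially produces a lattice-point-preserving piecewise-linear bijection $\Oc_{R'}=\widehat{\Oc}_R\to\widehat{\Cc}_R$. The desired equalities of Ehrhart polynomials would follow once this bijection is upgraded to one of the ambient $\Gamma$-polytopes that is the identity on the $\Oc_{P'}$-part. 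The cleanest way to do this — and, in view of the Gr\"obner-basis methods underlying Theorem~\ref{Gor}, the one I would expect — is to exhibit regular unimodular triangulations of $\Omega(\Oc_P,\Oc_Q)$, $\Omega(\Oc_P,\Cc_Q)$ and $\Omega(\Cc_P,\Cc_Q)$ whose maximal simplices are indexed by the same combinatorial data (a linear extension of $P$ and one of $Q$, glued coherently by means of the common linear extension); since two lattice polytopes with unimodular triangulations having equal $f$-vectors have equal $h^{*}$-vectors, hence equal Ehrhart polynomials, this completes the proof. The equality of volumes then follows at once, the volume being the leading coefficient of the Ehrhart polynomial.

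The main obstacle is precisely this last step. The horizontal slices of $\Omega(\Oc_P,\Oc_Q)$ are genuine Minkowski sums $\tfrac{n+s}{2}\Oc_P+\tfrac{n-s}{2}(-\Oc_Q)$ rather than products, so ``apply the transfer map only to the $\Oc_Q$-summand'' is not well defined on individual lattice points; one is forced to work cell by cell with a triangulation that simultaneously refines the linear-extension chambers of $\Oc_P$ and of $\Oc_Q$, where the decomposition becomes linear and $\varphi_Q$ acts unimodularly. Identifying the correct regular unimodular triangulation — equivalently, the correct squarefree Gr\"obner basis — and checking that the three triangulations of the $\Omega$-polytopes have identical $f$-vectors (the step at which the common-linear-extension hypothesis is genuinely used) is the technical heart of the argument, and I expect it to run parallel to the treatment of the $\Gamma$-polytopes in \cite{volOC}.
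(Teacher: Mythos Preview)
Your reduction $\Omega(\Oc_P,\Oc_Q)=\Gamma(\Oc_{P'},\Oc_{Q'})$ and the appeal to \cite{volOC} for the three $\Gamma$-polytopes is exactly what the paper does. Where you diverge is in handling $\Omega(\Oc_P,\Cc_Q)$ and $\Omega(\Cc_P,\Cc_Q)$: you try to push Stanley's transfer map through the construction, recognise that the Minkowski-sum slices obstruct this, and then pivot to the plan of finding regular unimodular triangulations of the three $\Omega$-polytopes with equal $f$-vectors. That plan is correct in spirit but you leave the crucial step unexecuted, and you overestimate how hard it is.

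The paper's route is shorter and avoids the geometric detour entirely. The squarefree initial ideals already computed in Propositions~\ref{OO}--\ref{CC} (for the proof of Theorem~\ref{Gor}) have generators indexed purely by the combinatorics of $\Jc(P)$ and $\Jc(Q)$: incomparable pairs $(I,I')$, incomparable pairs $(J,J')$, and pairs $(I,J)$ sharing a maximal element, together with $x_\emptyset y_\emptyset$. Since $I\leftrightarrow\max(I)$ is a bijection $\Jc(P)\to\Ac(P)$, the relabelling $x_I\mapsto x_{\max(I)}$, $y_J\mapsto y_{\max(J)}$ is an isomorphism of polynomial rings carrying one initial ideal onto another (this is Proposition~\ref{iso}). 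Hence the quotients $K[\Oc\Oc]/\ini_{<}(I_{\Omega(\Oc_P,\Oc_Q)})$, $K[\Oc\Cc]/\ini_{<}(I_{\Omega(\Oc_P,\Cc_Q)})$ and $K[\Cc\Cc]/\ini_{<}(I_{\Omega(\Cc_P,\Cc_Q)})$ are isomorphic as graded rings, so they share one Hilbert function; by normality (Theorem~\ref{Gor}) this Hilbert function is the common Ehrhart polynomial. In your language, this is the statement that the three triangulations are not merely of equal $f$-vector but actually isomorphic as simplicial complexes, and the isomorphism is given for free by the variable relabelling --- no cell-by-cell analysis of the transfer map is needed.
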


Moreover, a combinatorial formula to compute
the volume of these polytopes in terms of the underlying finite posets
$P$ and $Q$ (Theorem \ref{vol}).

Finally, in Section 4, 
some curious examples of Gorenstein Fano polytopes will be
discovered.
It will be shown that there exist normal polytopes $\Pc$ and $\Qc$ such that $\Omega(\Pc,\Qc)$
is Gorenstein Fano, but it is not normal (Example \ref{nonnormal}).
Considering this example, we cannot escape from the temptation to study
Question \ref{q2}.
We also consider a difference of the class of $\Gamma(\Pc, \Qc)$ and that 
of $\Omega(\Pc, \Qc)$.
It is known that the class of $\Omega(
\Oc_P,
\Oc_Q
)$ is included in 
that of $\Gamma(
\Oc_P,
\Oc_Q
)$.
However, the class of  $\Omega(
\Cc_P
,
\Cc_Q)$ is not included in that of  $\Gamma(
\Oc_P,
\Oc_Q
)$,  $\Gamma(
\Oc_P,
\Cc_Q)$ and $\Gamma(
\Cc_P
,
\Cc_Q)$ (Example \ref{class}).
The class of $\Omega(
\Oc_P
,
\Cc_Q)$ is included in none of the above classes.
This fact says that the class of $\Omega(
\Oc_P,
\Cc_Q)$ and that of $\Omega(
\Cc_P
,
\Cc_Q)$ are new classes of normal Gorenstein Fano polytopes.
Futhermore, it will be shown that,
by using these five classes,
there exist 11 normal Gorenstein Fano polytopes of dimension $7$ such that these polytopes have the same Ehrhart polynomial and these polytopes are not unimodularly equivalent each other (Example \ref{five}).

\section{Squarefree Quadratic Gr\"{o}bner basis}
In this section,  we show Theorem \ref{Gor}.

Before proving this theorem, 
we recall some terminologies of finite posets. 
Let $P=\{p_1,\ldots,p_d\}$ be a finite poset.
A subset $I$ of $P$ is called a {\em poset ideal} of $P$ if $p_{i} \in I$ and $p_{j} \in P$ together with $p_{j} \leq p_{i}$ guarantee $p_{j} \in I$.  
Note that the empty set $\emptyset$ and itself $P$ are poset ideals of $P$. 
Let $\Jc(P)$ denote the set of poset ideals of $P$.
A subset $A$ of $P$ is called an {\em antichain} of $P$ if
$p_{i}$ and $p_{j}$ belonging to $A$ with $i \neq j$ are incomparable.  
In particular, the empty set $\emptyset$ and each 1-elemant subsets $\{p_j\}$ are antichains of $P$.
Let $\Ac(P)$ denote the set of antichains of $P$.
For a poset ideal $I$ of $P$, we write $\max(I)$ for the set of maximal elements of $I$.
In particular, $\max(I)$ is an antichain.  
For each subset $I \subset P$, 
we define the $(0, 1)$-vectors $\rho(I) = \sum_{p_{i}\in I} \eb_{i}$, 
where $\eb_{1}, \ldots, \eb_{d}$ are the canonical unit coordinate vectors of $\RR^{d}$.  
In particular $\rho(\emptyset)$ is the origin ${\bf 0}$ of $\RR^{d}$. 
In \cite{Stanley}, it is shown that
\[
\{ {\rm the\ sets\ of\ vertices\ of\ } 
\Oc_P \} = \{ \rho(I) \mid I \in \Jc(P) \}, 
\]
\[
\{ {\rm the\ sets\ of\ vertices\ of\ } 
\Cc_P \} = \{ \rho(A) \mid A \in \Ac(P) \}.
\]


Next, we define the toric rings of integral convex polytopes.
Let $K[{\bf t^{\pm1}}, s] 
= K[t_{1}^{\pm1}, \ldots, t_{d}^{\pm1}, s]$
the Laurent polynomial ring in $d+1$ variables over a field $K$. 
If $\alpha = (\alpha_{1}, \ldots, \alpha_{d}) \in \ZZ^{d}$, then
${\bf t}^{\alpha}s$ is the Laurent monomial
$t_{1}^{\alpha_{1}} \cdots t_{d}^{\alpha_{d}}s \in K[{\bf t^{\pm1}}, s]$. 
In particular ${\bf t}^{\bf 0}s = s$.
Let  $\Pc \subset \RR^{d}$ be an integral convex polytope of dimension $d$ and $\Pc \cap \ZZ^d=\{\ab_1,\ldots,\ab_n\}$.
Then, the \textit{toric ring}
of $\Pc$ is the subalgebra $K[\Pc]$ of $K[{\bf t^{\pm1}}, s] $
generated by
$\{\tb^{\ab_1}s ,\ldots,\tb^{\ab_n}s \}$ over $K$.
We regard $K[\Pc]$ as a homogeneous algebra by setting each $\text{deg } \tb^{\ab_i}s=1$.
The \textit{toric ideal} $I_{\Pc}$ of $\Pc$ is the kernel of a surjective homomorphism $\pi : K[x_1,\ldots,x_n] \rightarrow K[\Pc]$
defined by $\pi(x_i)=\tb^{\ab_i}s$ for $1 \leq i \leq n$.
It is known that $I_{\Pc}$ is generated by homogeneous binomials.
See, e.g.,  \cite{Sturmfels}.

In order to prove Theorem \ref{Gor}, we use the following lemma.
\begin{Lemma}[{\cite[Lemma 1.1]{HMOS}}]
	\label{HMOS}
	Let $\Pc \subset \RR^d$ be an integral convex polytope such that the origin of $\RR^d$ is contained 
	in its interior and $\Pc \cap \ZZ^d=\{\ab_1,\ldots,\ab_n \}$.
	Suppose that any integer point in $\ZZ^{d+1}$ is a linear integer combination of the integer points in $\Pc \times \{1\}$ and there exists an ordering of the variables $x_{i_1} < \cdots < x_{i_n}$ for which $\ab_{i_1}= \mathbf{0}$ such that the initial ideal $\textnormal{in}_{<}(I_{\Pc})$ of the toric ideal $I_{\Pc}$ with respect to the reverse lexicographic order $<$ on the polynomial ring $K[x_1,\ldots,x_n]$
	induced by the ordering is squarefree.
	Then $\Pc$ is a normal Gorenstein Fano.
\end{Lemma}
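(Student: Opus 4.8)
The final statement to be proved is Lemma \ref{HMOS} itself: a general criterion asserting that an integral polytope $\Pc$ containing the origin in its interior is normal Gorenstein Fano, provided the integer points of $\Pc\times\{1\}$ generate $\ZZ^{d+1}$ as a group and $\ini_<(I_\Pc)$ is squarefree for a reverse lexicographic order whose smallest variable corresponds to the origin.

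\bigskip

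\noindent\emph{Proof proposal.}
The plan is to prove the two required conclusions — normality and the Gorenstein Fano property — separately, deriving each from the two hypotheses. First I would treat normality. The hypothesis that $\ini_<(I_\Pc)$ is squarefree is the key: a classical theorem (Sturmfels, see \cite{Sturmfels}) states that if the toric ideal $I_\Pc$ admits a squarefree initial ideal with respect to \emph{some} term order, then the toric ring $K[\Pc]$ is normal, equivalently the polytope $\Pc$ is normal in the sense defined above. Thus normality follows immediately from the squarefreeness hypothesis, independent of the ordering being reverse lexicographic. Note that the condition that the lattice points of $\Pc\times\{1\}$ generate $\ZZ^{d+1}$ ensures there is no discrepancy between the normalization computed in the full lattice and in the affine sublattice generated by the points, so that normality of $K[\Pc]$ genuinely corresponds to normality of $\Pc$.

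\bigskip

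\noindent The heart of the argument is the Gorenstein Fano conclusion, and here the \emph{reverse lexicographic} nature of the order, together with the placement of the origin as the smallest variable, becomes essential. The strategy is to pass to the Stanley--Reisner ring of the simplicial complex $\Delta$ whose Stanley--Reisner ideal equals the squarefree monomial ideal $\ini_<(I_\Pc)$. Since a reverse lexicographic initial ideal computes a triangulation of $\Pc$ (the pulling triangulation with respect to the chosen vertex ordering), the squarefree initial ideal corresponds to a \emph{unimodular} triangulation $\Delta$ of $\Pc$. The decisive point is that the smallest variable $x_{i_1}$ corresponds to the lattice point $\ab_{i_1}=\mathbf{0}$, the origin in the interior of $\Pc$. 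For a reverse lexicographic order, a squarefree variable that is smallest is a non-face-detector: concretely, every facet of the triangulation $\Delta$ must contain the vertex corresponding to the smallest variable, because if some maximal face avoided it, the product of the variables in that face together with $x_{i_1}$ would force a generator of $\ini_<(I_\Pc)$ divisible by $x_{i_1}$, contradicting squarefreeness combined with the reverse lexicographic minimality. Hence $\mathbf 0$ is a vertex of every maximal simplex, i.e.\ the triangulation is a \emph{cone} from the origin.

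\bigskip

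\noindent From the fact that $\Delta$ is a unimodular triangulation that is the cone from the origin over the boundary triangulation, the reflexivity of $\Pc$ can be read off. The plan is to argue that each facet $F$ of $\Pc$, together with the origin, spans a unimodular simplex of normalized volume $1$; since $\mathbf 0$ lies in the affine hyperplane at lattice distance $1$ from each facet (this is precisely unimodularity of the cone from $\mathbf 0$ to $F$), every supporting hyperplane of a facet has the form $\langle \xb,\cdot\rangle=1$ with an integral inner normal $\xb$. This says exactly that $\mathbf 0$ is the unique interior lattice point and that the dual polytope $\Pc^\vee$ is the integral polytope with vertices these integral facet-normals, which is the definition of Gorenstein Fano. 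Uniqueness of the interior lattice point follows because any interior lattice point would have to be a vertex of the triangulation lying strictly inside, but all vertices of a triangulation of $\Pc$ coming from its lattice points that include the apex $\mathbf 0$ are boundary points except $\mathbf 0$ itself.

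\bigskip

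\noindent The main obstacle I anticipate is establishing rigorously the geometric translation between the combinatorics of the reverse lexicographic squarefree initial ideal and the assertion that the associated triangulation is a unimodular cone from the origin. The algebraic statement (every minimal generator of $\ini_<(I_\Pc)$ is squarefree and none is divisible by the smallest variable) must be converted faithfully into the geometric statement (the triangulation is unimodular and every maximal simplex contains $\mathbf 0$), and from there into the lattice-distance-one condition characterizing reflexivity. The cleanest route is to invoke the correspondence between reverse lexicographic initial ideals and pulling triangulations, to use the squarefree hypothesis to certify unimodularity, and to use the minimality of the origin-variable to certify the cone structure; the reflexivity computation then reduces to the unimodularity of each cone over a facet.
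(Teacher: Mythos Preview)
The paper does not supply its own proof of this lemma; it is quoted verbatim from \cite[Lemma~1.1]{HMOS} and used as a black box. So there is no in-paper argument to compare against. That said, your outline is essentially the standard proof and is correct in its architecture: Sturmfels' theorem gives normality from the squarefree initial ideal, and the reverse lexicographic hypothesis forces the resulting unimodular regular triangulation to be a cone over the origin, which in turn puts every facet hyperplane at lattice distance~$1$ and hence makes $\Pc^\vee$ integral.

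Two places in your write-up deserve tightening. First, your justification that the origin is a cone point is phrased imprecisely: the clean statement is that for a homogeneous \emph{prime} ideal and reverse lexicographic order, the smallest variable never divides a minimal generator of the initial ideal (if it did, it would divide the corresponding Gr\"obner basis element, and primality would let you cancel it, contradicting minimality). Squarefreeness by itself plays no role in this step; it is the reverse lex property together with primality of $I_\Pc$ that does the work. Second, your argument for uniqueness of the interior lattice point via ``all vertices of the triangulation except $\mathbf 0$ lie on the boundary'' is not quite an argument: a cleaner route is to observe that once every facet hyperplane reads $\langle \ell_F,x\rangle=1$ with $\ell_F\in\ZZ^d$, any interior lattice point $p$ satisfies $\langle \ell_F,p\rangle\le 0$ for all $F$, and boundedness of $\Pc$ then forces $p=\mathbf 0$. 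With these two clarifications your proposal would constitute a complete proof of the cited lemma.
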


\smallskip
Now, for finite posets $P=\{p_1,\ldots,p_d\}$ and $Q=\{q_1,\ldots,q_d\}$, let 
\begin{eqnarray*}
	K[{\Oc \Oc}] &=& K[\{x_{I}\}_{I \in \Jc(P)} \cup 
	\{y_{J}\}_{J \in \Jc(Q)} \cup \{ z \}], \\
	K[{\Oc \Cc}] &=& K[\{x_{I}\}_{I \in \Jc(P)} \cup 
	\{y_{\max(J)}\}_{J \in \Jc(Q)} \cup \{ z \}], \\
	K[{\Cc \Cc}] &=& K[\{x_{\max(I)}\}_{I \in \Jc(P)} \cup 
	\{y_{\max(J)}\}_{J \in \Jc(Q)} \cup \{ z \}]
\end{eqnarray*}
denote the polynomial rings over $K$, 
and define the surjective ring homomorphisms 
$\pi_{\Oc \Oc}$, $\pi_{\Oc \Cc}$ and $\pi_{\Cc \Cc}$ by the following: 
\begin{itemize}
	\item $\pi_{\Oc \Oc} : K[{\Oc \Oc}] \to K[\Omega(
	\Oc_P,
	\Oc_Q
	)]$ by setting \\
	$\pi_{\Oc \Oc}(x_{I}) = {\bf t}^{\rho(I\cup\{d+1\})}s$, $\pi_{\Oc \Oc}(y_{J}) = {\bf t}^{- \rho(J\cup\{d+1\})}s$ and 
	$\pi_{\Oc \Oc}(z) = s$, 
	\item $\pi_{\Oc \Cc} : K[{\Oc \Cc}] \to K[\Omega(
	\Oc_P, 
	\Cc_Q)]$ by setting \\
	$\pi_{\Oc \Cc}(x_{I}) = {\bf t}^{\rho(I\cup\{d+1\})}s$, $\pi_{\Oc \Cc}(y_{\max(J)}) = {\bf t}^{- \rho(\max(J)\cup\{d+1\})}s$ and 
	$\pi_{\Oc \Cc}(z) = s$, 
	\item $\pi_{\Cc \Cc} : K[{\Cc \Cc}] \to K[\Omega(
	\Cc_P
	,
	\Cc_Q)]$ by setting \\
	$\pi_{\Cc \Cc}(x_{\max(I)}) = {\bf t}^{\rho(\max(I)\cup\{d+1\})}s$, $\pi_{\Cc \Cc}(y_{\max(J)}) = {\bf t}^{- \rho(\max(J)\cup\{d+1\})}s$ and $\pi_{\Cc \Cc}(z) = s$ 
\end{itemize}
where $I \in \Jc(P)$ and $J \in \Jc(Q)$. 
Then the toric ideal $I_{\Omega(
	\Oc_P,
	\Oc_Q
	)}$ of $\Omega(
\Oc_P,
\Oc_Q
)$ is 
the kernel of $\pi_{\Oc \Oc}$. 
Similarly, the toric ideal $I_{\Omega(
	\Oc_P,
	\Cc_Q)}$ (resp. $I_{\Omega(
	\Cc_P
	,
	\Cc_Q)}$) is 
the kernel of $\pi_{\Oc \Cc}$ (resp. $\pi_{\Cc \Cc}$). 


Next, we introduce monomial orders $<_{\Oc \Oc}$, $<_{\Oc \Cc}$ and $<_{\Cc \Cc}$
and $\Gc_{\Oc \Oc}$, $\Gc_{\Oc \Cc}$ and $\Gc_{\Cc \Cc}$ which are the set of binomials. 
Let $<_{\Oc \Oc}$ denote a reverse lexicographic order on $K[{\Oc \Oc}]$
satisfying
\begin{itemize}
	\item
	$z <_{\Oc \Oc}  y_{J} <_{\Oc \Oc} x_{I}$;
	\item
	$x_{I'} <_{\Oc \Oc} x_{I}$ if $I' \subset I$;
	\item
	$y_{J'} <_{\Oc \Oc} y_{J}$ if $J' \subset J$,
\end{itemize}
and $\Gc_{\Oc \Oc} \subset K[{\Oc \Oc}]$ the set of the following binomials:
\begin{enumerate}
	\item[(i)]
	$x_{I}x_{I'} - x_{I\cup I'}x_{I \cap I'}$;
	\item[(ii)]
	$y_{J}y_{J'} - y_{J\cup J'}y_{J \cap J}$;
	\item[(iii)]
	$x_{I}y_{J} - x_{I \setminus \{p_{i}\}}y_{J \setminus \{q_{i}\}}$;
	\item[(iv)]
	$x_{\emptyset}y_{\emptyset} - z^2$,
\end{enumerate}
and let $<_{\Oc \Cc}$ denote a reverse lexicographic order on $K[{\Oc \Cc}]$
satisfying
\begin{itemize}
	\item
	$z <_{\Oc \Cc}  y_{\max(J)} <_{\Oc \Cc} x_{I}$;
	\item
	$x_{I'} <_{\Oc \Cc} x_{I}$ if $I' \subset I$;
	\item
	$y_{\max(J')} <_{\Oc \Cc} y_{\max(J)}$ if $J' \subset J$,
\end{itemize}
and $\Gc_{\Oc \Cc} \subset K[{\Oc \Cc}]$ the set of the following binomials:
\begin{enumerate}
	\item[(v)]
	$x_{I}x_{I'} - x_{I\cup I'}x_{I \cap I'}$;
	\item[(vi)]
	$y_{\max(J)}y_{\max(J')} - y_{\max(J\cup J')}y_{\max(J * J')}$;
	\item[(vii)]
	$x_{I}y_{\max(J)} - x_{I \setminus \{p_{i}\}}y_{\max(J) \setminus \{q_{i}\}}$;
	\item[(viii)]
	$x_{\emptyset}y_{\emptyset} - z^2$,
\end{enumerate}
and let $<_{\Cc \Cc}$ denote a reverse lexicographic order on $K[{\Cc \Cc}]$
satisfying
\begin{itemize}
	\item
	$z <_{\Cc \Cc}  y_{\max(J)} <_{\Cc \Cc} x_{\max(I)}$;
	\item
	$x_{\max(I')} <_{\Cc \Cc} x_{\max(I)}$ if $I' \subset I$;
	\item
	$y_{\max(J')} <_{\Cc \Cc} y_{\max(J)}$ if $J' \subset J$,
\end{itemize}
and $\Gc_{\Cc \Cc} \subset K[{\Cc \Cc}]$ the set of the following binomials:
\begin{enumerate}
	\item[(ix)]
	$x_{\max(I)}x_{\max(I')} - y_{\max( I \cup I')}y_{\max(I * I')}$;
	\item[(x)]
	$y_{\max(J)}y_{\max(J')} - y_{\max(J\cup J')}y_{\max(J * J')}$;
	\item[(xi)]
	$x_{\max(I)}y_{\max(J)} - x_{\max(I) \setminus \{p_{i}\}}y_{\max(J) \setminus \{q_{i}\}}$;
	\item[(xii)]
	$x_{\emptyset}y_{\emptyset} - z^2$,
\end{enumerate}
where 
\begin{itemize}
	\item
	$I$ and $I'$ are poset ideals of $P$ which are incomparable in $\Jc(P)$;
	\item  
	$J$ and $J'$ are poset ideals of $Q$ which are incomparable in $\Jc(Q)$;
	\item
	$I * I'$ is the poset ideal of $P$ generated by $\max(I \cap I')\cap (\max(I)\cup \max(I'))$;
	\item
	$J * J'$ is the poset ideal of $Q$ generated by $\max(J \cap J')\cap (\max(J)\cup \max(J'))$;
	\item $p_{i}$ is a maximal element of $I$ and $q_{i}$ is a maximal element of $J$. 
\end{itemize}

\begin{Proposition}
	\label{OO}
	Work with the same situation as above.
	If $P$ and $Q$ possess a common linear extension,
	then the origin of $\RR^{d+1}$ is contained in the interior of $\Omega(
	\Oc_P,
	\Oc_Q
	)$
	and  $\Gc_{OO}$ is a Gr\"{o}bner basis of $I_{\Omega(
		\Oc_P,
		\Oc_Q
		)}$ with respect to $<_{OO}.$
\end{Proposition}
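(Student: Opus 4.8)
The plan is to use the assumed common linear extension to fix a convenient labelling, to dispose of the claim about the origin by a separating-functional argument, to read off the initial monomials of the members of $\Gc_{\Oc \Oc}$, and then to deduce the Gr\"obner basis assertion by comparing Hilbert functions. Throughout write $\Omega:=\Omega(\Oc_P,\Oc_Q)$. Since $P$ and $Q$ have a common linear extension, after relabelling the elements of both posets by a single permutation of $[d]$ we may assume that $p_a<p_b$ in $P$ implies $a<b$ and that $q_a<q_b$ in $Q$ implies $a<b$; in particular $\{p_1,\dots,p_j\}\in\Jc(P)$ and $\{q_1,\dots,q_j\}\in\Jc(Q)$ for every $0\le j\le d$. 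The vertices of the $(d+1)$-dimensional polytope $\Omega$ are among the points $(\rho(I),1)$ with $I\in\Jc(P)$ and $(-\rho(J),-1)$ with $J\in\Jc(Q)$, and since $\Omega$ is full-dimensional, the origin lies in its interior if and only if for every $\mathbf c=(c_1,\dots,c_{d+1})\in\RR^{d+1}\setminus\{\mathbf 0\}$ some vertex $\mathbf v$ of $\Omega$ has $\langle\mathbf c,\mathbf v\rangle>0$. If $c_{d+1}>0$ take $I=\emptyset$; if $c_{d+1}<0$ take $J=\emptyset$; if $c_{d+1}=0$ then the partial sums $c_1+\cdots+c_j$ $(1\le j\le d)$ do not all vanish, so choose $j$ with $c_1+\cdots+c_j\neq 0$ and use $(\rho(\{p_1,\dots,p_j\}),1)$ when this sum is positive and $(-\rho(\{q_1,\dots,q_j\}),-1)$ when it is negative. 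This settles the first assertion.

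Next I would check that $\Gc_{\Oc \Oc}\subseteq I_\Omega=\ker\pi_{\Oc \Oc}$: applying $\pi_{\Oc \Oc}$ to (i) and (ii) is the $(0,1)$-vector identity $\rho(I)+\rho(I')=\rho(I\cup I')+\rho(I\cap I')$, applying it to (iii) uses $\rho(I\setminus\{p_i\})=\rho(I)-\eb_i$ and $\rho(J\setminus\{q_i\})=\rho(J)-\eb_i$, and (iv) maps to $s^2-s^2$. Inspecting $<_{\Oc \Oc}$, the initial monomial of each binomial is its displayed left-hand term — $x_Ix_{I'}$ in (i), $y_Jy_{J'}$ in (ii), $x_Iy_J$ in (iii), $x_{\emptyset}y_{\emptyset}$ in (iv) — because for (i)--(iii) the strictly smallest variable occurring in the binomial, namely $x_{I\cap I'}$, $y_{J\cap J'}$, $y_{J\setminus\{q_i\}}$ respectively, has exponent $0$ on the left and $1$ on the right, and under a reverse lexicographic order this makes the left monomial the larger, while for (iv) the left monomial $x_{\emptyset}y_{\emptyset}$ is the larger because the smallest variable $z$ occurs only on the right. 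Let $M\subseteq\ini_{<_{\Oc \Oc}}(I_\Omega)$ be the monomial ideal generated by these initial monomials. It suffices to prove that $\pi_{\Oc \Oc}$ is injective on the standard monomials of $M$: then distinct standard monomials have distinct images, which are $K$-linearly independent Laurent monomials in $K[\Omega]$, so $\dim_K(K[{\Oc \Oc}]/M)_m\le\dim_K K[\Omega]_m$ for every $m$; since $M\subseteq\ini_{<_{\Oc \Oc}}(I_\Omega)$ and passing to initial ideals preserves Hilbert functions, the reverse inequality also holds, whence $M=\ini_{<_{\Oc \Oc}}(I_\Omega)$ and $\Gc_{\Oc \Oc}$ is the claimed Gr\"obner basis.

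A standard monomial of $M$ is a product $u=x_{I_1}\cdots x_{I_a}\,y_{J_1}\cdots y_{J_b}\,z^{c}$ in which $I_1\subseteq\cdots\subseteq I_a$ is a multichain in $\Jc(P)$, $J_1\subseteq\cdots\subseteq J_b$ a multichain in $\Jc(Q)$, $\bigl(\bigcup_k\max(I_k)\bigr)\cap\bigl(\bigcup_l\max(J_l)\bigr)=\emptyset$, and $x_{\emptyset}$ and $y_{\emptyset}$ do not both occur. The heart of the matter — the step I expect to be the main obstacle — is to reconstruct such a $u$ from $\pi_{\Oc \Oc}(u)=\tb^{w}s^{a+b+c}$, where $w=\sum_k(\rho(I_k),1)-\sum_l(\rho(J_l),1)\in\ZZ^{d+1}$. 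One cannot simply read the two multichains off the positive and negative parts of the $\ZZ^d$-component of $w$, because $\bigcup_kI_k$ and $\bigcup_lJ_l$ need not be disjoint; instead one argues that, under the normalisation above, every comparability in $P$ and in $Q$ points ``forward'', so that the coordinatewise values of $w$ together with the total degree $a+b+c$ pin down, layer by layer (by peeling off maximal elements), which $(0,1)$-vectors may occur as $\rho(I_k)$ and as $\rho(J_l)$, while the condition $\bigl(\bigcup_k\max(I_k)\bigr)\cap\bigl(\bigcup_l\max(J_l)\bigr)=\emptyset$ rules out the competing decompositions — for instance it forbids $u$ from carrying both an $x_I$ and a $y_J$ with $\max(I)\cap\max(J)\neq\emptyset$, a configuration that could otherwise be traded for a power of $z$. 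An alternative route to the same conclusion is to verify Buchberger's criterion directly for $\Gc_{\Oc \Oc}$: the $S$-polynomials of pairs of binomials of type (i) (resp.\ (ii)) reduce to zero by the classical computation of the Gr\"obner basis attached to the order polytope $\Oc_P$ (resp.\ $\Oc_Q$), and the $S$-polynomials involving a binomial of type (iii) or (iv) reduce to zero after a finite case analysis governed by the behaviour of $\max(\,\cdot\,)$ under intersections and by the distributive-lattice identities in $\Jc(P)$ and $\Jc(Q)$.
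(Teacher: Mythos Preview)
Your route is quite different from the paper's. The paper disposes of Proposition~\ref{OO} in two lines: setting $P'=\{p_{d+1}\}\oplus P$ and $Q'=\{q_{d+1}\}\oplus Q$, one has $\Jc(P')=\{\emptyset\}\cup\{I\cup\{p_{d+1}\}:I\in\Jc(P)\}$ and similarly for $Q'$, so the vertex set of $\Gamma(\Oc_{P'},\Oc_{Q'})$ is $\{\mathbf 0\}\cup\{(\rho(I),1):I\in\Jc(P)\}\cup\{(-\rho(J),-1):J\in\Jc(Q)\}$, and since $\mathbf 0$ is already the midpoint of $(\rho(\emptyset),1)$ and $(-\rho(\emptyset),-1)$ this equals $\Omega(\Oc_P,\Oc_Q)$. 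Everything then follows from \cite{twin}. Your separating-functional argument for the origin is correct and pleasantly self-contained, and your overall strategy for the Gr\"obner basis claim (initial monomials, then Hilbert-function comparison via injectivity of $\pi_{\Oc\Oc}$ on standard monomials) is exactly the strategy the paper carries out in detail for Proposition~\ref{OC}.

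The gap is that you do not actually prove the injectivity step; you flag it as ``the main obstacle'' and give only a heuristic. This step is genuinely harder here than in the $\Oc\Cc$ case, and your sketch does not show where the common linear extension enters. Concretely: mimicking the argument of Proposition~\ref{OC}, one picks a maximal $p_{i^*}\in I_a\setminus I'_{a'}$ and compares $t_{i^*}$-degrees to force $q_{i^*}$ into some $J_c$. In the $\Oc\Cc$ setting the $y$-variables are indexed by antichains, so $q_{i^*}\in\max(J_c)$ automatically, and $x_{I_a}y_{\max(J_c)}$ is a type-(vii) initial monomial---contradiction. In the $\Oc\Oc$ setting you only obtain $q_{i^*}\in J_c$, which is \emph{not} enough to hit a type-(iii) initial monomial; you still need $q_{i^*}\in\max(J_c)$. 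Forcing this is precisely where the hypothesis of a common linear extension is required (for instance, by choosing $i^*$ to be the \emph{largest index} in that extension among the elements witnessing a discrepancy, so that maximality of the index yields maximality in both posets simultaneously). Neither your ``peeling off maximal elements'' outline nor your Buchberger alternative supplies this, so as written the proof is incomplete at its crucial point.
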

\begin{proof}	
	Set $P'=\{p_{d+1}\} \oplus P$ and $Q'=\{q_{d+1}\} \oplus Q$.
	Then we have 
	$$\Jc(P')=\{\emptyset\} \cup \{I \cup \{p_{d+1}\} | I \in \Jc(P) \},$$
	$$\Jc(Q')=\{\emptyset\} \cup \{J \cup \{q_{d+1}\} | J \in \Jc(Q) \}.$$
	Hence we know that $\Omega(
	\Oc_P,
	\Cc_Q
	)=\Gamma(\Oc_{P'},\Oc_{Q'})$.
	By \cite{twin}, we can easily show 	if $P$ and $Q$ possess a common linear extension,
	then the origin of $\RR^{d+1}$ is contained in the interior of $\Omega(
	\Oc_P,
	\Oc_Q
	)$
	and  $\Gc_{OO}$ is a Gr\"{o}bner basis of $I_{\Omega(
		\Oc_P,
		\Oc_Q
		)}$ with respect to $<_{OO}$, as desired.
\end{proof}

\begin{Proposition}
	\label{OC}
	Work with the same situation as above.
	Then $\Gc_{OC}$ is a Gr\"{o}bner basis of $I_{\Omega(
		\Oc_P,
		\Cc_Q)}$ with respect to $<_{OC}.$
\end{Proposition}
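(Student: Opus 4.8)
The plan is to prove the statement via Buchberger's criterion. First I would check that $\Gc_{\Oc\Cc}\subseteq I_{\Omega(\Oc_P,\Cc_Q)}$: applying $\pi_{\Oc\Cc}$ to the two terms of each binomial, those of type (v) come from the lattice identity $\rho(I)+\rho(I')=\rho(I\cup I')+\rho(I\cap I')$, those of type (vi) from the identity $\rho(\max(J))+\rho(\max(J'))=\rho(\max(J\cup J'))+\rho(\max(J*J'))$ established in \cite{orderchain,harmony}, those of type (vii) from $\rho(I)-\rho(I\setminus\{p_i\})=\rho(\max(J))-\rho(\max(J)\setminus\{q_i\})=\eb_i$, and (viii) is immediate; in every case the extra $(d+1)$st coordinate contributes $+1$ to each $x$-generator and $-1$ to each $y$-generator, so it matches on both sides of (v) and (vi) and cancels in (vii) and (viii). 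Comparing with $<_{\Oc\Cc}$ then shows that the initial monomial is $x_Ix_{I'}$ in (v), $y_{\max(J)}y_{\max(J')}$ in (vi), $x_Iy_{\max(J)}$ in (vii) and $x_\emptyset y_\emptyset$ in (viii), because in each tail monomial there is a variable ($x_{I\cap I'}$, $y_{\max(J*J')}$, $y_{\max(J)\setminus\{q_i\}}$, or $z$, respectively) strictly smaller than every variable in the head; in particular the initial ideal is squarefree and generated in degree $2$.

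A useful preliminary observation is that the variable $z$ occurs only in (viii), whose initial monomial $x_\emptyset y_\emptyset$ is coprime to the initial monomial of every other element of $\Gc_{\Oc\Cc}$ — indeed $x_\emptyset$ never occurs in the initial monomial of a binomial of type (v) or (vii), and $y_\emptyset$ never in one of type (vi) or (vii), since the poset ideals involved have maximal elements and are therefore nonempty. So by the first Buchberger criterion all $S$-pairs involving (viii) reduce to $0$, and the problem becomes purely combinatorial in $P$ and $Q$. Among the other binomials, $S$-pairs with coprime initial monomials — in particular every pair of a binomial of type (v) with one of type (vi) — reduce to $0$ automatically; the $S$-pairs among the binomials of type (v) reduce to $0$ because these are the defining binomials of the Hibi ring of the distributive lattice $\Jc(P)$, classically known to form a Gr\"obner basis, and the $S$-pairs among those of type (vi) reduce to $0$ by the chain-polytope Gr\"obner basis computation of \cite{orderchain,harmony}.

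The essential case is the $S$-pairs involving a binomial of type (vii): the pairs (v)--(vii), (vi)--(vii) and (vii)--(vii) have non-coprime initial monomials precisely when they share an $x$-variable or a $y$-variable, and for each such pair one must write the $S$-polynomial as a combination of binomials of types (v), (vi) and (vii). The combinatorial heart here is to control how the operations $I\mapsto I\setminus\{p_i\}$ on poset ideals of $P$ and $J\mapsto\langle\max(J)\setminus\{q_i\}\rangle$, $(J,J')\mapsto J*J'$ on poset ideals of $Q$ interact with $\cup$ and $\cap$; this is exactly the analysis carried out for $I_{\Gamma(\Oc_P,\Cc_Q)}$ in \cite{orderchain,harmony}, and since the $(d+1)$st coordinate cancels in all of these reductions, that analysis transfers essentially verbatim. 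I expect this to be the only real obstacle. Alternatively one can bypass $S$-polynomials and show that the standard monomials of the computed initial ideal — the products $z^{c}x_{I_1}^{a_1}\cdots x_{I_m}^{a_m}y_{\max(J_1)}^{b_1}\cdots y_{\max(J_l)}^{b_l}$ with $I_1\subsetneq\cdots\subsetneq I_m$ a chain in $\Jc(P)$, $J_1\subsetneq\cdots\subsetneq J_l$ a chain in $\Jc(Q)$, $I_1$ and $J_1$ not both empty, and $\max(I_s)$, $\max(J_t)$ having disjoint index sets for all $s$ and $t$ — are sent by $\pi_{\Oc\Cc}$ to pairwise distinct Laurent monomials; the obstacle is then the explicit reconstruction of such a standard monomial from its image, which is again the argument of \cite{orderchain,harmony} carried along with one extra coordinate.
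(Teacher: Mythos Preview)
Your proposal is correct, and in fact you outline both the paper's own route and an alternative. The paper does \emph{not} run Buchberger's algorithm; it takes exactly your ``alternative'' approach. Invoking the criterion of \cite[(0.1)]{OHrootsystem}, it assumes two relatively prime standard monomials $u,v$ satisfy $\pi_{\Oc\Cc}(u)=\pi_{\Oc\Cc}(v)$ and derives a contradiction by comparing $t_i$-degrees: the chain conditions $I_1\subsetneq\cdots\subsetneq I_a$ and $J_1\subsetneq\cdots\subsetneq J_b$ come from (v) and (vi); if $I_a\setminus I'_{a'}\neq\emptyset$ one picks a maximal $p_{i^*}$ there, and the $t_{i^*}$-degree forces some $q_{i^*}\in\max(J_c)$, contradicting standardness with respect to (vii). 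This reduces both $x$-parts to at most a power of $x_\emptyset$; the analogous step then handles the $y$-parts, and finally the $t_{d+1}$-degree (the only place the extra coordinate is used) kills the remaining exponents.

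Your primary Buchberger plan is also viable and is conceptually clean, since your coprimality observation for (viii) immediately reduces the question to the $\Gamma(\Oc_P,\Cc_Q)$ Gr\"obner basis. One caution, however: the papers \cite{orderchain,harmony} establish their Gr\"obner bases via the same injectivity-of-standard-monomials technique rather than by explicit $S$-pair reduction, so the (v)--(vii), (vi)--(vii) and (vii)--(vii) reductions you defer to them are not literally written out there. You would need either to supply that casework yourself or, as the paper does, to import the existing injectivity argument. The paper's choice has the minor advantage that the $(d{+}1)$st coordinate enters only in the very last line, so the passage from the $\Gamma$-case to the $\Omega$-case is essentially free.
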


\begin{proof}
	It is clear that $\Gc_{\Oc\Cc} \subset I_{\Omega(
		\Oc_P, 
		\Cc_Q)}$.
	For a binomial $f = u - v$, $u$ is called the {\em first}
	monomial of $f$ and $v$ is called the {\em second} monomial of $f$. 
	We note that the initial monomial of each of the binomials (v) -- (viii) 
	with respect to $<_{\Oc\Cc}$ is its first monomial. 
	Let ${\rm in}_{<_{\Oc\Cc}}(\Gc_{\Oc\Cc})$ denote the set of initial monomials of binomials 
	belonging to $\Gc_{\Oc\Cc}$.  It follows from \cite[(0.1)]{OHrootsystem} that,
	in order to show that $\Gc_{\Oc\Cc}$ is a Gr\"obner basis of
	$I_{\Omega(
		\Oc_P,
		\Cc_Q)}$ with respect to $<_{\Oc\Cc}$, we must prove 
	the following assertion:
	($\clubsuit$) If $u$ and $v$ are monomials belonging to 
	$K[\Oc \Cc]$ with $u \neq v$ such that 
	$u \not\in \langle {\rm in}_{<_{\Oc\Cc}}(\Gc_{\Oc\Cc}) \rangle$ 
	and $v \not\in \langle {\rm in}_{<_{\Oc\Cc}}(\Gc_{\Oc\Cc}) \rangle$,
	then $\pi_{\Oc\Cc}(u) \neq \pi_{\Oc\Cc}(v)$.
	
	Let $u, v \in K[\Oc\Cc]$ be monomials
	with $u \neq v$.  Write
	\[
	u = z^{\alpha} x_{I_{1}}^{\xi_{1}} \cdots x_{I_{a}}^{\xi_{a}}
	y_{\max(J_{1})}^{\nu_{1}} \cdots y_{\max(J_{b})}^{\nu_{b}},
	\, \, \, \, \, \, \, \, \, \, 
	v = z^{\alpha'} x_{I'_{1}}^{\xi'_{1}} \cdots x_{I'_{a'}}^{\xi'_{a'}}
	y_{\max(J'_{1})}^{\nu'_{1}} \cdots y_{\max(J'_{b'})}^{\nu'_{b'}},
	\]
	where
	\begin{itemize}
		\item
		$\alpha \geq 0$, $\alpha' \geq 0$;
		\item
		$I_{1}, \ldots, I_{a}, I'_{1}, \ldots, I'_{a'} 
		\in \Jc(P)$;
		\item
		$J_{1}, \ldots, J_{b}, J'_{1}, \ldots, J'_{b'} 
		\in \Jc(Q)$;
		\item
		$\xi_{1}, \ldots, \xi_{a}, 
		\nu_{1}, \ldots, \nu_{b},
		\xi'_{1}, \ldots, \xi'_{a'}, 
		\nu'_{1}, \ldots, \nu'_{b'} > 0$,
	\end{itemize}
	and where $u$ and $v$ are relatively prime with
	$u \not\in \langle {\rm in}_{<_{\Oc\Cc}}(\Gc_{\Oc\Cc}) \rangle$ 
	and $v \not\in \langle {\rm in}_{<_{\Oc\Cc}}(\Gc_{\Oc\Cc}) \rangle$.
	Thus
	By using (v) and (vi), it follows that
	\begin{itemize}
		\item
		$I_{1} \subsetneq  I_{2} \subsetneq \cdots \subsetneq I_{a}$ and $J_{1} \subsetneq J_{2} \subsetneq \cdots \subsetneq J_{b}$;
		\item
		$I'_{1} \subsetneq I'_{2} \subsetneq \cdots \subsetneq I'_{a'}$ and	$J'_{1} \subsetneq J'_{2} \subsetneq \cdots \subsetneq J'_{b'}$.
	\end{itemize}
	
	Now, suppose that $\pi_{\Oc\Cc}(u)=\pi_{\Oc\Cc}(v)$.
	Then we have 
	$$\sum\limits_{\stackrel{I \in \{I_1,\ldots,I_a\}}{p_i \in I}}\xi_I-\sum\limits_{\stackrel{J \in \{J_1,\ldots,J_b\}}{q_i \in \max(J)}}\nu_J
	=\sum\limits_{\stackrel{I' \in \{I'_1,\ldots,I'_{a'}\}}{p_i \in I'}}\xi'_{I'}-\sum\limits_{\stackrel{J' \in \{J'_1,\ldots,J'_{b'}\}}{q_i \in \max(J')}}\nu'_{J'}.$$
	for all $1 \leq i \leq d$ by comparing the degree of $t_i$. 
	
	Assume that $(a,a')\neq (0,0)$ and $I_a \setminus I'_{a'} \neq \emptyset$.
	Then there exists a maximal element $p_{i^*}$ of $I_a$ with $p_{i^*} \notin I'_{a'}$. 
	Since $p_{i^*} \notin I'_{a'}$, one has
	$$\sum\limits_{\stackrel{I \in \{I_1,\ldots,I_a\}}{p_{i^*} \in I}}\xi_I-\sum\limits_{\stackrel{J \in \{J_1,\ldots,J_b\}}{q_{i^*} \in \max(J)}}\nu_J
	=-\sum\limits_{\stackrel{J' \in \{J'_1,\ldots,J'_{b'}\}}{q_{i^*} \in \max(J')}}\nu'_{J'}\leq 0.$$
	Moreover,  since $p_{i^*}$ is belonging to $I_a$, we also have 
	$$\sum\limits_{\stackrel{I \in \{I_1,\ldots,I_a\}}{p_{i^*} \in I}}\xi_I>0. $$
	Hence there exists an integer $c$ with $1\leq c \leq b$ such that $q_{i^*}$ is a maximal element of $J_c$.
	Therefore we have $x_{I_a}y_{\max(J_c)} \in \langle{\rm in}_{<_{\Oc\Cc}}(\Gc_{\Oc\Cc}) \rangle$, but this is a contradiction.
	By considering the case where  $(a,a')\neq (0,0)$ and $I'_{a'} \setminus I_{a} \neq \emptyset$, it is known that one of  the followings is satisfied:
	\begin{itemize}
		\item $(a,a')=(1,0), I_a=\emptyset$;
		\item $(a,a')=(0,1), I_{a'}=\emptyset$;
		\item $(a,a')=(0,0)$.
	\end{itemize}
	Then we have 
	$$\sum\limits_{\stackrel{J \in \{J_1,\ldots,J_b\}}{q_i \in \max(J)}}\nu_J
	=\sum\limits_{\stackrel{J' \in \{J'_1,\ldots,J'_{b'}\}}{q_i \in \max(J')}}\nu'_{J'}.$$
	for all $1 \leq i \leq d$.
	Assume that $(b,b')\neq (0,0)$ and $J_b \setminus J'_{b'} \neq \emptyset$.
	Then there exists a maximal element $q_{i'}$ of $J_b$ with $q_{i'} \notin J'_{b'}$. 
	Since $q_{i'} \notin J'_{b'}$, one has
	$$0<\sum\limits_{\stackrel{J \in \{J_1,\ldots,J_b\}}{q_{i'} \in \max(J)}}\nu_J
	\neq \sum\limits_{\stackrel{J' \in \{J'_1,\ldots,J'_{b'}\}}{q_{i'} \in \max(J')}}\nu'_{J'}=0,$$
	but this is a contradiction.
	By considering the case where  $(b,b')\neq (0,0)$ and $J'_{b'} \setminus J_{b} \neq \emptyset$, it is known that one of  the followings is satisfied:
	\begin{itemize}
		\item $(b,b')=(1,0), J_b=\emptyset$;
		\item $(b,b')=(0,1), J'_{b'}=\emptyset$;
		\item $(b,b')=(0,0)$.
	\end{itemize}
	Hence one has
	$u = z^{\alpha} x_{{\emptyset}}^{\xi}y_{\emptyset}^{\nu}$ and
	$v = z^{\alpha'} x_{\emptyset}^{\xi'} y_{\emptyset}^{\nu'}$,
	where $\xi,\xi',\nu,\nu' \geq 0$.
	Since $x_{\emptyset}y_{\emptyset} \in  \langle{\rm in}_{<_{\Oc\Cc}}(\Gc_{\Oc\Cc}) \rangle$ and since $u$ and $v$ are relatively prime,
	we may assume that $\nu=\xi'=0$.
	Thus $u = z^{\alpha} x_{{\emptyset}}^{\xi}$ and
	$v = z^{\alpha'} y_{\emptyset}^{\nu'}$.
	Note that either $\alpha = 0$ or $\alpha' = 0$.
	Hence by comparing the degree of $t^{d+1}$, it is known that $\xi=\nu'=\alpha=\alpha'=0$, contradiction.
\end{proof}

\begin{Proposition}
	\label{CC}
	Work with the same situation as above.
	Then $\Gc_{CC}$ is a Gr\"{o}bner basis of $I_{\Omega(
		\Cc_P
		,
		\Cc_Q)}$ with respect to $<_{CC}.$
\end{Proposition}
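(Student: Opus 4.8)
The plan is to copy the proof of Proposition~\ref{OC} with only cosmetic changes: on the $P$-side the variables are now indexed by the antichains $\max(I)$ rather than by the poset ideals $I$, so $x_{\max(I)}$ will play the role that $y_{\max(J)}$ played there, and no common linear extension of $P$ and $Q$ is needed. The one ingredient not already present in Proposition~\ref{OC} is checking that the type (ix) and (x) binomials belong to the toric ideal with the asserted leading terms; once that combinatorial fact about the $*$-operation on poset ideals is in hand, I expect the remainder to be a routine transcription, so there is no serious obstacle.

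First I would verify $\Gc_{\Cc\Cc}\subset I_{\Omega(\Cc_P,\Cc_Q)}$. In the type (xi) binomials the two copies of $\eb_i$ coming from the $x$- and $y$-factors cancel; (xii) is immediate since $\pi_{\Cc\Cc}(z)=s$; and for (ix), (x) one needs, for incomparable poset ideals $I,I'$, the identity
\[
\rho(\max(I))+\rho(\max(I'))=\rho(\max(I\cup I'))+\rho(\max(I*I')),
\]
which one checks by a short case analysis on the membership of a fixed element in $\max(I)$ and in $\max(I')$ — this is precisely the combinatorics underlying the quadratic Gr\"obner basis of the toric ideal of a chain polytope. Next, with respect to $<_{\Cc\Cc}$ the initial monomial of every binomial in $\Gc_{\Cc\Cc}$ is its first monomial: $I\cup I'$ is a strictly larger ideal than $I$ and $I'$, while $I*I'$, $\langle\max(I)\setminus\{p_i\}\rangle$ and $\langle\max(J)\setminus\{q_i\}\rangle$ are strictly smaller, and since $z<_{\Cc\Cc}y_{\max(J)}<_{\Cc\Cc}x_{\max(I)}$, the variable occurring in the second monomial but not in the first is always the smallest variable present.

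By \cite[(0.1)]{OHrootsystem} it then suffices to show: if $u\neq v$ are relatively prime monomials of $K[\Cc\Cc]$ with $u,v\notin\langle{\rm in}_{<_{\Cc\Cc}}(\Gc_{\Cc\Cc})\rangle$, then $\pi_{\Cc\Cc}(u)\neq\pi_{\Cc\Cc}(v)$. Writing $u=z^{\alpha}x_{\max(I_1)}^{\xi_1}\cdots x_{\max(I_a)}^{\xi_a}y_{\max(J_1)}^{\nu_1}\cdots y_{\max(J_b)}^{\nu_b}$ and $v$ similarly with primed data (all exponents positive), the fact that the initial monomials of (ix) and (x) divide neither $u$ nor $v$ forces, after reindexing, $I_1\subsetneq\cdots\subsetneq I_a$, $J_1\subsetneq\cdots\subsetneq J_b$, and likewise for the primed chains. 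Assuming $\pi_{\Cc\Cc}(u)=\pi_{\Cc\Cc}(v)$ and comparing the exponent of $t_i$ for $1\le i\le d$ yields
\[
\sum_{j\,:\,p_i\in\max(I_j)}\xi_j-\sum_{k\,:\,q_i\in\max(J_k)}\nu_k=\sum_{j\,:\,p_i\in\max(I'_j)}\xi'_j-\sum_{k\,:\,q_i\in\max(J'_k)}\nu'_k,
\]
and from here the argument of Proposition~\ref{OC} applies line by line, with ``$p_i\in\max(I)$'' in place of ``$p_i\in I$''. If $a,a'\ge1$, coprimality gives $I_a\neq I'_{a'}$, so some maximal element $p_{i^*}$ of (say) $I_a$ lies outside $I'_{a'}$; then the right-hand side of the $t_{i^*}$-equation is $\le0$ while $\sum_{j\,:\,p_{i^*}\in\max(I_j)}\xi_j\ge\xi_a>0$, which forces $q_{i^*}\in\max(J_c)$ for some $c$, so that $x_{\max(I_a)}y_{\max(J_c)}$ — the initial monomial of the type (xi) binomial with index $i^*$ — divides $u$, a contradiction; the cases $a'=0$ or $a=0$ are the same, using any maximal element of the nonempty ideal. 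Hence $(a,a')$ is $(1,0)$ with $I_1=\emptyset$, or $(0,1)$ with $I'_1=\emptyset$, or $(0,0)$; the displayed equations then reduce to $\sum_{k\,:\,q_i\in\max(J_k)}\nu_k=\sum_{k\,:\,q_i\in\max(J'_k)}\nu'_k$ for all $i$, and the identical reasoning applied to $J_b$ versus $J'_{b'}$ yields the analogous trichotomy for $(b,b')$. Thus $u=z^{\alpha}x_\emptyset^{\xi}y_\emptyset^{\nu}$ and $v=z^{\alpha'}x_\emptyset^{\xi'}y_\emptyset^{\nu'}$; since $x_\emptyset y_\emptyset$ is an initial monomial and $u,v$ are coprime we may assume $u=z^{\alpha}x_\emptyset^{\xi}$ and $v=z^{\alpha'}y_\emptyset^{\nu'}$, and then comparing the exponent of $t_{d+1}$, together with the fact that $z$ divides at most one of $u,v$, forces every exponent to vanish, so $u=v=1$, contradicting $u\neq v$. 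This establishes the criterion, and with it the proposition.
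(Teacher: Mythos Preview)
Your proposal is correct and follows exactly the approach the paper intends: the paper's own proof of Proposition~\ref{CC} is the single sentence ``We can show that the assertion follows by a similar way in the proof of Proposition~\ref{OC},'' and you have carried out precisely that transcription, replacing the condition $p_i\in I$ on the $x$-side by $p_i\in\max(I)$ and otherwise repeating the argument of Proposition~\ref{OC} verbatim. The extra care you take in verifying $\Gc_{\Cc\Cc}\subset I_{\Omega(\Cc_P,\Cc_Q)}$ and in identifying the leading terms is more detail than the paper supplies, but it is all consistent with the intended proof.
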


\begin{proof}
	We can show that the assertion follows by a similar way in the proof of Proposition \ref{OC}.
\end{proof}

Finally, we show Theorem \ref{Gor}.
\begin{proof}[Proof of Theorem \ref{Gor}]
	It is easy to show that any integer point in $\ZZ^{d+2}$ is a linear integer combination of the integer points in $\Omega(\Oc_{P}, \Oc_{Q}) \times \{1\}$ (resp. $\Omega(\Oc_{P}, \Cc_{Q}) \times \{1\}$ and $\Omega(\Cc_{P}, \Cc_{Q}) \times \{1\}$). 
	By Lemma \ref{HMOS} and Proposition \ref{OO}, \ref{OC}, \ref{CC}, the assertion follows.	
\end{proof}

\section{Ehrhart polynomials and volume}
In this section, we consider combinatorial properties of these polytopes, especially, 
the Ehrhart polynomials and the volume of $\Omega(
\Oc_P,
\Cc_Q
)$, $\Omega(
\Oc_P,
\Cc_Q
)$ and $\Omega(
\Oc_P,
\Cc_Q
)$,
for finite posets $P=\{p_1,\ldots,p_d\}$ and $Q=\{q_1,\ldots,q_d\}$.
In particular, we show Theorem \ref{Ehr}.

Let $\Pc \subset \RR^d$ be an integral convex polytope of dimension $d$.
In order to prove Theorem \ref{Ehr}, we use the following facts. 
\begin{itemize}
	\item  
	If $\Pc$ is normal, then the Ehrhart polynomial of $\Pc$ is equal to the Hilbert polynomial of 
	the toric ring $K[\Pc]$.  \\
	\item Let $S$ be a polynomial ring and $I \subset S$ be a graded ideal of $S$. 
	Let $<$ be a monomial order on $S$. 
	Then $S/I$ and $S/\mathrm{in}_{<}(I)$ have the same Hilbert function. 
	(see \cite[Corollary 6.1.5]{Monomial})
\end{itemize}

Here, we put
\[
R_{\Oc \Oc} := K[\Oc \Oc]/\mathrm{in}_{<_{\Oc \Oc}}(I_{\Omega(
	\Oc_P, 
	\Oc_Q
	)}), 
\]
\[
R_{\Oc \Cc} := K[\Oc \Cc]/\mathrm{in}_{<_{\Oc \Cc}}(I_{\Omega(
	\Oc_P, 
	\Cc_Q)}), 
\]
\[
R_{\Cc \Cc} := K[\Cc \Cc]/\mathrm{in}_{<_{\Cc \Cc}}(I_{\Omega(
	\Cc_P
	, 
	\Cc_Q)}). 
\]
\begin{Proposition}
	\label{iso}
	Work with the same situation as above.
	If $P$ and $Q$ possess a common linear extension, then these rings 
	$R_{\Oc \Oc}$, $R_{\Oc \Cc}$ and $R_{\Cc \Cc}$ are isomorphic. 
\end{Proposition}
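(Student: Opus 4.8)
The plan is to produce explicit graded $K$-algebra isomorphisms between the three polynomial rings that carry one initial ideal onto another, so that the assertion follows by passing to quotients. First I would record the elementary fact that $I \mapsto \max(I)$ is a bijection from $\Jc(P)$ onto $\Ac(P)$, with inverse sending an antichain to the poset ideal it generates, and likewise for $Q$. Consequently $K[{\Oc \Oc}]$, $K[{\Oc \Cc}]$ and $K[{\Cc \Cc}]$ have the same number of variables, and there are natural graded $K$-algebra isomorphisms
\[
\phi \colon K[{\Oc \Oc}] \longrightarrow K[{\Oc \Cc}], \qquad x_I \mapsto x_I, \quad y_J \mapsto y_{\max(J)}, \quad z \mapsto z,
\]
\[
\psi \colon K[{\Oc \Oc}] \longrightarrow K[{\Cc \Cc}], \qquad x_I \mapsto x_{\max(I)}, \quad y_J \mapsto y_{\max(J)}, \quad z \mapsto z,
\]
for $I \in \Jc(P)$ and $J \in \Jc(Q)$; indeed the $x$-variables (resp.\ $y$-variables) of the three rings are indexed by $\Jc(P)$ or $\Ac(P)$ (resp.\ by $\Jc(Q)$ or $\Ac(Q)$), and these index sets are identified through $\max$.

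Next I would invoke Propositions \ref{OO}, \ref{OC} and \ref{CC}: these say that $\Gc_{\Oc \Oc}$, $\Gc_{\Oc \Cc}$ and $\Gc_{\Cc \Cc}$ are Gr\"obner bases in which the initial monomial of every binomial is its first monomial (this is made explicit for $\Oc \Cc$ in the proof of Proposition \ref{OC}, and the common linear extension hypothesis is used only in Proposition \ref{OO}). Hence $\mathrm{in}_{<_{\Oc \Oc}}(I_{\Omega(\Oc_P,\Oc_Q)})$, $\mathrm{in}_{<_{\Oc \Cc}}(I_{\Omega(\Oc_P,\Cc_Q)})$ and $\mathrm{in}_{<_{\Cc \Cc}}(I_{\Omega(\Cc_P,\Cc_Q)})$ are generated by the first monomials of the binomials (i)--(iv), (v)--(viii) and (ix)--(xii), respectively. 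It then remains a purely combinatorial comparison. Under $\psi$ one has $x_I x_{I'} \mapsto x_{\max(I)} x_{\max(I')}$, $y_J y_{J'} \mapsto y_{\max(J)} y_{\max(J')}$, $x_I y_J \mapsto x_{\max(I)} y_{\max(J)}$ and $x_\emptyset y_\emptyset \mapsto x_\emptyset y_\emptyset$ (since $\max(\emptyset) = \emptyset$), and for each of these the condition under which the binomial occurs---namely $I,I'$ incomparable in $\Jc(P)$, $J,J'$ incomparable in $\Jc(Q)$, or the existence of an $i$ with $p_i \in \max(I)$ and $q_i \in \max(J)$---is literally the same for the binomial of $\Gc_{\Oc \Oc}$ and its counterpart in $\Gc_{\Cc \Cc}$. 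Thus $\psi$ sends the generators of $\mathrm{in}_{<_{\Oc \Oc}}(I_{\Omega(\Oc_P,\Oc_Q)})$ bijectively onto those of $\mathrm{in}_{<_{\Cc \Cc}}(I_{\Omega(\Cc_P,\Cc_Q)})$, so these monomial ideals correspond under $\psi$; the same argument applied to $\phi$ handles $\mathrm{in}_{<_{\Oc \Cc}}(I_{\Omega(\Oc_P,\Cc_Q)})$. Passing to quotients gives $R_{\Oc \Oc} \cong R_{\Oc \Cc}$ and $R_{\Oc \Oc} \cong R_{\Cc \Cc}$.

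The one thing requiring care is that the argument must be made at the level of \emph{first monomials} rather than of the Gr\"obner bases themselves: $\phi$ and $\psi$ do not map $\Gc_{\Oc \Oc}$ onto $\Gc_{\Oc \Cc}$ or $\Gc_{\Cc \Cc}$ verbatim, because the second monomials of the binomials (iii), (vii), (xi) are built from genuinely different poset ideals (for example $\max(J \setminus \{q_i\})$ need not equal $\max(J) \setminus \{q_i\}$), but this is irrelevant to the initial ideals. Accordingly, one should describe the ``mixed'' part of each initial ideal intrinsically, as generated by the monomials $x_I y_J$, respectively $x_I y_{\max(J)}$ or $x_{\max(I)} y_{\max(J)}$, for which some $i$ satisfies $p_i \in \max(I)$ and $q_i \in \max(J)$, since such a binomial depends on a choice of $i$ while its first monomial does not; and one should note that $x_\emptyset y_\emptyset$ is the unique ``empty'' generator in all three cases, as $\emptyset$ is the least element of $\Jc(P)$ and of $\Jc(Q)$ and $\max(\emptyset) = \emptyset$. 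Beyond this bookkeeping there is no real obstacle; the proposition is essentially a corollary of the three Gr\"obner basis propositions.
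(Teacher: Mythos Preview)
Your proof is correct and follows essentially the same approach as the paper: both arguments use Propositions \ref{OO}, \ref{OC}, \ref{CC} to identify the three initial ideals explicitly as monomial ideals with the same combinatorial description under the bijection $I \leftrightarrow \max(I)$, and then define the obvious relabeling maps $x_I \leftrightarrow x_{\max(I)}$, $y_J \leftrightarrow y_{\max(J)}$, $z \leftrightarrow z$ to obtain the isomorphisms. Your additional remark that the maps match only the initial monomials and not the full Gr\"obner bases is a useful clarification, but the underlying argument is the same.
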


\begin{proof}
	By Proposition \ref{OO}, \ref{OC} and \ref{CC}, we have
	\[
	R_{\Oc \Oc} \cong \frac{K[\{x_{I}\}_{ I \in \Jc(P)} \cup 
		\{y_{J}\}_{J \in \Jc(Q)} \cup \{ z \}]}
	{(x_{I}x_{I'}, y_{J}y_{J'}, x_{I}y_{J},x_{\emptyset}y_{\emptyset}  \mid I, I', J \ \mathrm{and}\ J' \ \mathrm{satisfy \ (*)}) }, 
	\]
	\[
	R_{\Oc \Cc} \cong \frac{K[\{x_{I}\}_{I \in \Jc(P)} \cup 
		\{y_{\max(J)}\}_{J \in \Jc(Q)} \cup \{ z \}]}
	{(x_{I}x_{I'}, y_{\max(J)}y_{\max(J')}, x_{I}y_{\max(J)},x_{\emptyset}y_{\emptyset} \mid I, I', J \ \mathrm{and}\ J' \ \mathrm{satisfy \ (*)}) }, 
	\]
	\[
	R_{\Cc \Cc} \cong \frac{K[\{x_{\max(I)}\}_{I \in \Jc(P)} \cup 
		\{y_{\max(J)}\}_{J \in \Jc(Q)} \cup \{ z \}]}
	{(x_{\max(I)}x_{\max(I')}, y_{\max(J)}y_{\max(J')}, x_{\max(I)}y_{\max(J)},x_{\emptyset}y_{\emptyset}  \mid I, I', J \ \mathrm{and}\ J' \ \mathrm{satisfy \ (*)}) }, 
	\]
	where the condition $(*)$ is the following: 
	\begin{itemize}
		\item
		$I$ and $I'$ are poset ideals of $P$ which are incomparable in $\Jc(P)$;
		\item  
		$J$ and $J'$ are poset ideals of $Q$ which are incomparable in $\Jc(Q)$;
		\item 
		There exists $1 \leq i \leq d$ such that $p_{i}$ is a maximal element of $I$ and $q_{i}$ is a maximal element of $J$. 
	\end{itemize}
	Hence it is easy to see that 
	the ring homomorphism $\phi : R_{\Oc \Cc} \to R_{\Cc \Cc}$ by setting $\phi(x_I) = x_{\max(I)}$, 
	$\phi(y_{\max(J)}) = y_{\max(J)}$ and $\phi(z) = z$ is an isomorphism. 
	Similarly, if $P$ and $Q$ possess a common linear extension, 
	we can see that the ring homomorphism $\phi^{'} : R_{\Oc \Oc} \to R_{\Oc \Cc}$ by setting $\phi^{'}(x_I) = x_I$, 
	$\phi^{'}(y_J) = y_{\max(J)}$ and $\phi^{'}(z) = z$ is an isomorphism.
	Hence it is known that $R_{\Oc \Oc} \cong R_{\Oc \Cc} \cong R_{\Cc \Cc}$, as desired.
\end{proof} 

Now, we prove Theorem \ref{Ehr}.
\begin{proof}[Proof of Theorem \ref{Ehr}]
	By Theorem \ref{Gor}, it is known that
	that  $\Omega(
	\Oc_P,  
	\Oc_Q
	)$, $\Omega(
	\Oc_P,  
	\Cc_Q)$ and $\Omega(
	\Cc_P
	,  
	\Cc_Q)$ are normal. 
	Hence the Ehrhart polynomial of $\Omega(
	\Oc_P,  
	\Oc_Q
	)$ (resp.  $\Omega(
	\Oc_P,  
	\Cc_Q)$ and $\Omega(
	\Cc_P
	,  
	\Cc_Q)$) 
	is equal to the Hilbert polynomial of 
	$K[\Omega(
	\Oc_P,  
	\Oc_Q
	)]$ (resp.  $\Omega(
	\Oc_P,  
	\Cc_Q)$ and $K[\Omega(
	\Cc_P
	, 
	\Cc_Q)]$). 
	By Proposition \ref{iso}, $R_{\Oc \Oc}$, $R_{\Oc \Cc}$ and $R_{\Cc \Cc}$ have the same Hilbert polynomial. 
	Hence $K[\Omega(
	\Oc_P,  
	\Oc_Q
	)]$, $K[\Omega(
	\Oc_P,  
	\Cc_Q)]$ and $K[\Omega(
	\Cc_P
	, 
	\Cc_Q)]$ 
	also have the same Hilbert polynomial. 
	On the other hand, in the proof of Proposition \ref{OC}, it is known that $\Omega(
	\Oc_P,
	\Oc_Q
	)=\Gamma(\Oc(P'),\Oc(Q'))$.
	Hence by \cite[Theorem 1.1]{volOC},  we have the desired conclusion. 
\end{proof}

Finally, we give a combinatorial formula to compute the volume of these polytopes
in terms of the underlying finite posets
$P$ and $Q$.

Given a subset $W$ of $[d]$ we define the \textit{induced subposet} of $P$ on $W$
to be the finite poset $P_W=\{p_i \mid i \in W \}$
such that  $p_i \leq p_j$ in $P_W$ if and only if $p_i \leq p_j$ in $P$. 
For $W \subset [d]$,
we set $\Delta_W(P,Q)=P_W \oplus Q_{\overline{W}}$, where $\overline{W}=[d] \setminus W$.
Note that
$\Delta_{W}(P,Q)$ is a $d$-element poset 
and we have $\Ac(\Delta_W(P,Q))=\Ac(P_W) \cup \Ac(Q_{\overline{W}})$.
Let $W=\{i_1,\ldots,i_k\} \subset [d]$ and $\overline{W}=\{i_{k+1},\ldots,i_d\} \subset [d]$
with $W \cup \overline{W}=[d]$.
Then we have $\Delta_W(P,Q)=\{p_{i_1},\ldots,p_{i_k},q_{i_{k+1}},\ldots,q_{i_d}  \}$.

Now, by \cite[Theorem 1.3]{vol} and Theorem \ref{Ehr}, we obtain the following theorem.
\begin{Theorem}
	\label{vol}
	Let $P=\{p_1,\ldots,p_d\}$ and $Q=\{q_1,\ldots,q_d\}$ be finite posets, 
	and set $P'=\{p_{d+1}\} \oplus P$ and $Q'=\{q_{d+1}\} \oplus Q$.
	If $P$ and $Q$ possess a common linear extension, then we have
	$$\textnormal{vol}(\Omega(
	\Oc_P, 
	\Cc_Q
	))=\sum\limits_{W \subset [d+1]}\cfrac{e(\Delta_W(P',Q'))}{(d+1)!}.$$
\end{Theorem}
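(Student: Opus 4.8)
The plan is to reduce Theorem~\ref{vol} to the volume formula for $\Gamma$-type polytopes already available in the literature. By Theorem~\ref{Ehr}, under the hypothesis that $P$ and $Q$ possess a common linear extension, the polytope $\Omega(\Oc_P,\Cc_Q)$ has the same Ehrhart polynomial, hence the same volume, as $\Gamma(\Oc_{P'},\Cc_{Q'})$, where $P'=\{p_{d+1}\}\oplus P$ and $Q'=\{q_{d+1}\}\oplus Q$. Since $P'$ and $Q'$ also possess a common linear extension (extend a common linear extension of $P,Q$ by placing $p_{d+1}$, identified with $q_{d+1}$, first), the combinatorial formula of \cite[Theorem 1.3]{vol} applies to $\Gamma(\Oc_{P'},\Cc_{Q'})$ and expresses its volume as a sum over subsets of $[d+1]$ of the quantities $e(\Delta_W(P',Q'))/(d+1)!$. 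Combining these two facts yields the asserted identity.

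Concretely, I would first record that $P'$ and $Q'$ are posets on $d+1$ elements sharing a common linear extension, so that both Theorem~\ref{Ehr} and \cite[Theorem 1.3]{vol} are applicable. Next I would invoke Theorem~\ref{Ehr} to get $\vol(\Omega(\Oc_P,\Cc_Q)) = \vol(\Gamma(\Oc_{P'},\Cc_{Q'}))$. Then I would quote \cite[Theorem 1.3]{vol}, which states precisely that
\[
\vol(\Gamma(\Oc_{P'},\Cc_{Q'})) = \sum_{W\subset[d+1]}\frac{e(\Delta_W(P',Q'))}{(d+1)!},
\]
where $\Delta_W(P',Q') = (P')_W \oplus (Q')_{\overline{W}}$ with $\overline{W}=[d+1]\setminus W$, using the conventions set up just before the statement. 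Chaining the two equalities gives the theorem.

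The only point requiring a small amount of care is to make sure the indexing conventions match: the formula in \cite[Theorem 1.3]{vol} is stated for pairs of posets with a common linear extension and sums over subsets of the ground set, so one must check that ``the ground set of $P'$ and $Q'$'' is naturally identified with $[d+1]$ (via $p_{d+1}\leftrightarrow q_{d+1}$ and $p_i\leftrightarrow q_i$), which is exactly the identification already implicit in the definition of $\Delta_W$ and in the statement of Theorem~\ref{Ehr}. I do not anticipate a genuine obstacle here: the substantive work has been done in Theorem~\ref{Gor} (normality, which is what makes Ehrhart$=$Hilbert available) and in Theorem~\ref{Ehr} (the Gröbner-basis isomorphism of initial algebras together with \cite[Theorem 1.1]{volOC}); this final theorem is a direct corollary obtained by transporting the known $\Gamma$-volume formula across the Ehrhart-equivalence $\Omega(\Oc_P,\Cc_Q)=\Gamma(\Oc_{P'},\Cc_{Q'})$ established in the proof of Proposition~\ref{OC}.
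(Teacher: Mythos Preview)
Your proof is correct and follows exactly the paper's approach, which is simply the one-line observation that the result is obtained by combining Theorem~\ref{Ehr} with \cite[Theorem~1.3]{vol}. One small correction to your closing sentence: it is $\Omega(\Oc_P,\Oc_Q)=\Gamma(\Oc_{P'},\Oc_{Q'})$ that holds as a literal equality of polytopes (and this is shown in the proof of Proposition~\ref{OO}, not Proposition~\ref{OC}); for the $\Oc\Cc$-pair only the Ehrhart polynomials agree, which is precisely what you invoke correctly in your main argument via Theorem~\ref{Ehr}.
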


\section{Examples}
In this section, we give some curious examples of Gorenstein Fano polytopes.
At first, the following example motivates considering Question \ref{q2}.
\begin{Example}
	\label{nonnormal}
	Let $\Pc \subset \RR^9$ be the $(0,1)$-polytope of dimension $9$ whose vertices are followings:
	$$e_1+e_2,e_2+e_3,e_3+e_4,e_4+e_5,e_1+e_5,e_1+e_6,e_1+e_7,e_2+e_7,e_2+e_8,e_3+e_8,$$
	$$e_3+e_9,e_4+e_9,e_4,e_5,e_5+e_6.$$
	Then $\Pc$ is normal \textnormal{(\cite{Opoly})}.
	Moreover, $\Omega(\Pc,\Pc)$ is Gorenstein Fano, but it is not normal.
\end{Example}
By this example, it is  known that even if $\Pc$ and $\Qc$ are normal $(0,1)$-polytopes, $\Omega(\Pc,\Qc)$ is not always normal.
It hasn't been known whether there exists a normal Gorenstein Fano polytope $\Qc$ such that the normal polytope $\Pc$ in Example \ref{nonnormal} is unimodularly equivalent to a face of $\Qc$ or not.

Next, we consider a difference of the class of $\Gamma(\Pc, \Qc)$ and the class of $\Omega(\Pc, \Qc)$.
It is known that the class of $\Omega(
\Oc_P,
\Oc_Q
)$ is included in 
that of $\Gamma(
\Oc_P,
\Oc_Q
)$.
\begin{Example}
	\label{class}
	Let $P$ be the finite poset  as follows,
	\newline
	\begin{picture}(400,150)(10,50)
	\put(100,170){$P$:}
	\put(170,170){\circle*{5}}
	\put(170,130){\circle*{5}}
	\put(210,170){\circle*{5}}
	\put(210,130){\circle*{5}}
	\put(210,90){\circle*{5}}
	\put(170,90){\circle*{5}}
	\put(215,168){$p_6$}
	\put(175,168){$p_5$}
	\put(215,128){$p_4$}
	\put(175,128){$p_3$}
	\put(215,88){$p_2$}
	\put(175,88){$p_1$}
	\put(170,170){\line(0,-1){80}}
	\put(210,170){\line(0,-1){80}}
	\put(170,170){\line(1,-1){40}}
	\put(210,170){\line(-1,-1){40}}
	\put(170,90){\line(1,1){40}}
	\put(210,90){\line(-1,1){40}}
	\end{picture}\\
	For any finite poset $P'$ with $7$ elements, it is known that 
	the $f$-vector of $\Omega(\Cc_P	,	\Cc_P)$ is not equal to that of $\Gamma(\Oc_{P'},\Oc_{P'})$  and $\Gamma(\Cc_{P'},\Cc_{P'})$.
	Hence 	$\Omega(
	\Cc_P
	,
	\Cc_P
	)$ is not unimodularly equivalent to
	$\Gamma(\Oc_{P'},\Oc_{P'})$ and $\Gamma(\Cc_{P'},\Cc_{P'})$.  
\end{Example}
By this example, we know that
the class of  $\Omega(
\Cc_P
,
\Cc_Q)$ is not included in that of  $\Gamma(
\Oc_P,
\Oc_Q
)$,  $\Gamma(
\Oc_P,
\Cc_Q)$ and $\Gamma(
\Cc_P
,
\Cc_Q)$.
Similarly, the class of $\Omega(
\Oc_P
,
\Cc_Q)$ is included in none of the above classes.
This fact says that the class of $\Omega(
\Oc_P,
\Cc_Q)$ and that of $\Omega(
\Cc_P
,
\Cc_Q)$ are new classes of normal Gorenstein Fano polytopes.

\begin{Example}
	\label{five}
	Let $P$ be the finite poset as in Example \ref{class} and $P'=\{p_{7}\}\oplus P$.
	Also, we let $P_1, P_2$ and $P_3$ be the finite posets as follows:
	\newline
	\begin{picture}(400,200)(10,0)
	\put(60,170){$P_1$:}
	\put(90,170){\circle*{5}}
	\put(90,130){\circle*{5}}
	\put(130,170){\circle*{5}}
	\put(130,130){\circle*{5}}
	\put(130,50){\circle*{5}}
	\put(90,50){\circle*{5}}
	\put(135,168){$p_6$}
	\put(95,168){$p_5$}
	\put(135,128){$p_4$}
	\put(95,128){$p_3$}
	\put(135,48){$p_2$}
	\put(95,48){$p_1$}
	\put(115,88){$p_7$}
	\put(110,90){\circle*{5}}
	\put(90,170){\line(0,-1){40}}
	\put(130,170){\line(0,-1){40}}
	\put(90,170){\line(1,-1){40}}
	\put(130,170){\line(-1,-1){40}}
	\put(90,130){\line(1,-2){40}}
	\put(130,130){\line(-1,-2){40}}

	\put(170,170){$P_2$:}
	\put(200,170){\circle*{5}}
	\put(200,90){\circle*{5}}
	\put(240,170){\circle*{5}}
	\put(240,90){\circle*{5}}
	\put(240,50){\circle*{5}}
	\put(200,50){\circle*{5}}
	\put(245,168){$p_6$}
	\put(205,168){$p_5$}
	\put(245,88){$p_4$}
	\put(205,88){$p_3$}
	\put(245,48){$p_2$}
	\put(205,48){$p_1$}
	\put(225,128){$p_7$}
	\put(220,130){\circle*{5}}
	\put(200,90){\line(0,-1){40}}
	\put(240,90){\line(0,-1){40}}
	\put(200,50){\line(1,1){40}}
	\put(240,50){\line(-1,1){40}}
	\put(200,170){\line(1,-2){40}}
	\put(240,170){\line(-1,-2){40}}
	
	\put(280,170){$P_3$:}
	\put(310,130){\circle*{5}}
	\put(310,90){\circle*{5}}
	\put(350,130){\circle*{5}}
	\put(350,90){\circle*{5}}
	\put(350,50){\circle*{5}}
	\put(310,50){\circle*{5}}
	\put(355,128){$p_6$}
	\put(315,128){$p_5$}
	\put(355,88){$p_4$}
	\put(315,88){$p_3$}
	\put(355,48){$p_2$}
	\put(315,48){$p_1$}
	\put(335,168){$p_7$}
	\put(330,170){\circle*{5}}
	\put(310,130){\line(0,-1){80}}
	\put(350,130){\line(0,-1){80}}
	\put(310,130){\line(1,-1){40}}
	\put(350,130){\line(-1,-1){40}}
	\put(310,50){\line(1,1){40}}
	\put(350,50){\line(-1,1){40}}
	\put(330,170){\line(1,-2){20}}
	\put(330,170){\line(-1,-2){20}}
	\end{picture}\\
	Then we have $$\Gamma(\Cc_{P'},\Cc_{P'})=\Gamma(\Cc_{P_1},\Cc_{P_1})=\Gamma(\Cc_{P_2},\Cc_{P_2})=\Gamma(\Cc_{P_3},\Cc_{P_3}).$$
	Hence it is known that  the 11 normal Gorenstein Fano polytopes $$\Omega(
	\Oc_P,
	\Oc_P), \Omega(
	\Oc_P,
	\Cc_P
	), \Omega(
	\Cc_P
	,
	\Cc_P
	),$$   $$\Gamma(\Oc_{P'},\Cc_{P'}), \Gamma(\Cc_{P'},\Cc_{P'}),$$
	$$\Gamma(\Oc_{P_1},\Oc_{P_1}), \Gamma(\Oc_{P_2},\Oc_{P_2}), \Gamma(\Oc_{P_3},\Oc_{P_3}),$$
	$$\Gamma(\Oc_{P_1},\Cc_{P_1}), \Gamma(\Oc_{P_2},\Cc_{P_2}), \Gamma(\Oc_{P_3},\Cc_{P_3})$$
	have the same Ehrhart polynomial.
	However, these polytopes are not unimodularly equivalent each other.
\end{Example}

By these five classes of normal Gorenstein Fano polytopes, we can obtain several interesting examples.
From this example, one of the future problem is to discuss how many Gorenstein Fano polytopes which have the same Ehrhart polynomial.

Finally, we give some examples of this problem.
\begin{Example}
	Let $\Pc \subset \RR^d$ be the normal Gorenstein Fano simplex of dimension $d$ whose vertices are followings:
	$$e_1,\ldots,e_d,-e_1-\cdots-e_d.$$
	Then we have $i(\Pc,n)=\sum_{i=0}^{d}\binom{n+d-i}{d}$.
	On the other hand, every Gorenstein Fano polytope of dimension $d$ whose Ehrhart polynomial is equal to
	$\sum_{i=0}^{d}\binom{n+d-i}{d}$ is unimodularly equivalemt to $\Pc$.
\end{Example}

\begin{Example}
	By checking any Gorenstein Fano polytopes of dimension $2$,
	we obtain followings:
	\begin{itemize}
		\item The number of Gorenstein Fano polytopes whose Ehrhart polynomials equal $\frac{3}{2}n^2+\frac{3}{2}n+1$ is $1$;
		\item The number of Gorenstein Fano polytopes whose Ehrhart polynomials equal $2n^2+2n+1$ is $3$;
		\item The number of Gorenstein Fano polytopes whose Ehrhart polynomials equal $\frac{3}{2}n^2+\frac{3}{2}n+1$ is $2$;
		\item The number of Gorenstein Fano polytopes whose Ehrhart polynomials equal $3n^2+3n+1$ is $4$;
		\item The number of Gorenstein Fano polytopes whose Ehrhart polynomials equal $\frac{5}{2}n^2+\frac{5}{2}n+1$ is $2$;
		\item The number of Gorenstein Fano polytopes whose Ehrhart polynomials equal $4n^2+4n+1$ is $3$;
		\item The number of Gorenstein Fano polytopes whose Ehrhart polynomials equal $\frac{7}{2}n^2+\frac{7}{2}n+1$ is $1$.
	\end{itemize}
\end{Example}


\begin{thebibliography}{99}
	
	\bibitem{mirror}
	V.~Batyrev, 
	Dual polyhedra and mirror symmetry for Calabi-Yau hypersurfaces in toric varieties,  
	{\em J. Algebraic Geom.}, 
	{\bf 3} (1994), 493--535. 
	
	\bibitem{Cox}
	D.~Cox, J.~Little and H.~Schenck, 
	``Toric varieties", 
	{Amer. Math. Soc.}, 2011. 
	
	\bibitem{class}
	G. Ewald,
	On the classification of toric Fano varieties,
	{\em Disc. Comput. Geom.} {\bf 3} (1988), 49--54.
	
	\bibitem{Ehrhart}
	E. Ehrhart, 
	``Polynom\^{e}s Arithm\'{e}tiques et M\'{e}thode des Poly\'{e}dres en Combinatorie", 
	Birkh\"{a}user, Boston/Basel/Stuttgart, 1977. 
	
	\bibitem{refdim}
	C. Haase and H. V. Melinkov,
	The Reflexive Dimension of a Lattice Polytope,
	{\em Ann. Comb.} {\bf 10} (2006), 211--217.
	
	\bibitem{Monomial}
	H. Herzog and T. Hibi, 
	``Monomial Ideals", 
	Graduate Text in Mathematics, 
	Springer, 2011. 
	
	
	
	
	
	
	\bibitem{twin}
	T.~Hibi and K.~Matsuda, 
	Quadratic Gr\"{o}bner bases of twinned order polytopes, 
	{\em European J. Combin.} \textbf{54}(2016), 187--192.
	
	\bibitem{HMOS}
	T.~Hibi, K.~Matsuda, H.~Ohsugi and K.~Shibata,
	Centrally symmetric configurations of order polytopes,
	{\em J. Algebra} \textbf{443}(2015), 469--478.
	
	\bibitem{orderchain}
	T. Hibi, K. Matsuda  and A. Tsuchiya, 
	Quadratic Gr\"{o}bner bases arising from partially ordered sets, 
	{\em Math. Scand.}, to appear. 
	
	\bibitem{volOC}
	T. Hibi, K. Matsuda  and A. Tsuchiya, 
	Gorenstein Fano polytopes arising from order polytopes and  chain polytopes,
	arXiv:1507.03221. 
	
	\bibitem{Kre}
	M. Kreuzer and H. Skarke,
	Complete classification of reflexive polyhedra in four dimensions,
	\textit{Adv. Theor. Math. Phys.} \textbf{4}(2000), 1209--1230.
	
	\bibitem{Lag}
	J. C. Lagarias and G. M. Ziegler,
	Bounds for lattice polytopes containing a fixed number of interior points in a sublattice,
	\textit{Canad. J. Math.} \textbf{43}(1991), 1022--1035.
	
	\bibitem{Opoly}
	H.~Ohsugi and T.~Hibi,
	A normal $(0,1)$-polytope none of whose regular triangulations is unimodular,
	{\em Disc. Comput. Geom.} {\bf 21} (1999), 201--204.
	
	\bibitem{OHrootsystem}
	H.~Ohsugi and T.~Hibi,
	Quadratic initial ideals of root systems,
	{\em Proc. Amer. Math. Soc.} {\bf 130} (2002), 1913--1922.
	
	\bibitem{harmony}
	H.~Ohsugi and T.~Hibi,
	Reverse lexicographic squarefree initial ideals and Gorenstein Fano polytopes,
	{\em J. Commut. Alg.}, to appear.
	
	\bibitem{Stanley}
	R.~P.~Stanley,
	Two poset polytopes, 
	{\em Disc. Comput. Geom.} {\bf 1} (1986), 9--23.
	
	\bibitem{Sturmfels}
	B. Sturmfels, 
	``Gr\"{o}bner bases and convex polytopes," 
	Amer. Math. Soc., Providence, RI, 1996. 
	
	\bibitem{vol}
	A. Tsuchiya, 
	Volume, facets and dual polytopes of twinned chain polytopes,
	arXiv:1512.08332. 
\end{thebibliography}
\end{document}